\theoremstyle{plain}
\newtheorem{theorem}{Theorem}
\newtheorem{lemma}{Lemma}
\theoremstyle{definition}
\newtheorem{definition}{Definition}
\newtheorem{remark}{Remark}
\newtheorem{example}{Example}
\DeclareMathOperator*{\esssup}{ess\,sup}
\DeclareMathOperator*{\essinf}{ess\,inf}
\providecommand{\keywords}[1]
\title[Composition operators on functions space]{On the Composition Operator with variable integrability} 
\author{Carlos F. \'{A}lvarez}
\address{Instituto de Matemáticas, Pontificia Universidad Católica de Valparaíso, Blanco Viel 596, Cerro Barón, Valparaíso, Chile}
\email{carlos.alvarez.e@pucv.cl}
\author{Javier Henríquez-Amador}
\address{Departamento de Matemáticas, Universidad del Valle, Ciudadela Universitaria Meléndez
Edificio E20, Cali, Colombia}
\email{javier.henriquez@correounivalle.edu.co}
\date{\today}
\keywords{Composition operators; variable exponent space; Continuity; Compactness}
\subjclass[2020]{Primary: 47B33; Secondary: 46E30}
\begin{document}

\begin{abstract}
In this note, we consider a class of composition operators on Lebesgue spaces with variable exponents over metric measure spaces. Taking advantage of the compatibility between the metric-measurable structure and the regularity properties of the variable exponent, we provide necessary and sufficient conditions for this class of operators to be bounded and compact, respectively. In addition, we show the usefulness of the variable change to study weak compactness properties in the framework of non-standard spaces.
\end{abstract}
\maketitle
\tableofcontents

\section{Introduction}
Let $(X,d,\mu)$ be a metric measure space equipped with a metric $d$ and the Borel regular
measure $\mu.$ If $\varphi: X\rightarrow X$ is a non-singular map, i.e., \begin{equation}\label{I1}
    \mu(\varphi^{-1}(E))=0, \ \ \textrm{for all Borel $\mu$-measurable set} \ E\subset X \ \ \textrm{with} \ \ \mu(E)=0,
\end{equation}
then we can define the composition operator $$C_{\varphi}: f \mapsto f\circ\varphi, \ \  \textrm{for every measurable function $f$ on} \ \ X.$$
According to (\ref{I1}) the measure $(\mu\circ\varphi^{-1})(\cdot):=\mu(\varphi^{-1}(\cdot))$ is absolutely continuous with respect to the measure $\mu$ therefore the Radon-Nikodym theorem implies that there exists a non-negative function $u_{\varphi}\in L^{1}(X)$ such that 
\begin{equation}\label{I1*}
    \mu(\varphi^{-1}(E))=\displaystyle\int_{E}u_{\varphi}(x) \ d\mu, \ \ \textrm{for all Borel }  \ \mu\textrm{-measurable set} \ E\subset X.  
\end{equation}
The composition operator \(C_{\varphi}\) appears naturally in the context of variable change and has recently been applied to dynamical systems, partial differential equations, and data science. For more details, see \cite{bevanda2021koopman,vcrnjaric2020koopman,hataya2024glocal,zhang2023quantitative}. For seminal applications, refer to \cite{koopman1931hamiltonian,koopman1932dynamical}.  
 
In $L^{p}$ spaces, characterizing of the boundedness of composition operators is a fundamental problem (e.g., see \cite{Bourdard2000,Rosenthal1995, SinnghManhas1993,Takagi1992}). For other function spaces, see \cite{AroraDattVerma2007,Bourduard2019, CowenMacCauler1995, gol2010homeomorphisms, ikeda2023boundedness, maligranda2011hidegoro, vodop1989mappings,vodopyanov2005composition}. It is well known that $C_{\varphi}$ continuously maps $L^{p}(X)$ into itself if and only if the function $u_{\varphi}$ is essentially bounded on $X$. In such a case, $\varphi$ is said to induce a composition operator on $L^p(X)$.

The first question that we address in this note is: 
\begin{itemize}
    \item[(Q1)]\label{Q1} What kind of control should be imposed on the variable exponent $p:X\rightarrow \mathbb{R}$ such that the map $\varphi$ induces a composition operator $C_{\varphi}$ on $L^{p(\cdot)}(X)$? 
\end{itemize}

A fruitful analysis of variable integrability spaces $L^{p(\cdot)}(X)$ is obtained when $X$ is a Euclidean domain, for example see \cite{cruz-uribe2013,diening2011lebesgue,diening2004open}. Unfortunately, the same cannot be said for general metric measure spaces. However, the authors in \cite{harjulehto2004variable} achieved boundedness of the maximal operator (see also \cite{kokilashvili2016integral} by endowing $X$) with a notion of “dimension” through a local uniformity condition:  
\begin{equation}\label{I2*}
\mu(B(x,r))\approx r^{p(x)}, \ \ x\in X,
\end{equation}
where, $B(x,r):=\{z\in X:d(z,x)<r\}, \ r\geq0.$

An essential difficulty in proving the boundedness of $C_{\varphi}$ over all $L^{p(\cdot)}(X)$ is that \textit{Cavalier's principle} (see \cite[Lemma~1.10]{bjorn2011nonlinear}) does not hold. This motivates our question about appropriate control conditions (Q1). A control on the variable exponent $p(\cdot)$ compatible with a condition like \eqref{I2*}, which we propose in this article, is: 
\begin{equation}\label{I3*}
     \inf_{x\in B}p(x)\leq \inf_{x\in \varphi^{-1}(B)}p(x)\leq\sup_{x\in \varphi^{-1}(B)}p(x)\leq  \sup_{x\in B}p(x).
 \end{equation}
 for every ball $B\subset X.$ Note that when $p(\cdot)$ is constant, the condition (\ref{I3*}) is trivial.  it seems natural to impose a “control” on the local supremum and infimum of the variable exponent as a new ingredient in our analysis. 
 
 Another important point is that our boundedness result for $C_{\varphi}$ over $L^{p(\cdot)}(X)$ can be extended in several directions including:   
\begin{itemize}
\item $n$-dimensional Euclidean domains on $\mathbb{R}^{n}$ with Lebesgue measure and Euclidean distance. 

\item Complete Riemannian manifolds of positive Riemannian measure and distance.  

\item Locally compact and separable group equipped with a left-invariant metric and left-invariant Haar measure.
\end{itemize}

Currently, the boundedness of $C_{\varphi}$ on variable integrability spaces such as $L^{p(\cdot)}(X)$ is not fully understood, even for $n$-dimensional Euclidean domains. However, boundedness results for these operators are known in more regular function spaces, such as variable exponent Bergman spaces (see \cite{morovatpoor2020boundedness}) and recently in holomorphic function spaces \cite{KR2022}. Additionally, the inequality
\begin{equation}\label{I4*}
\displaystyle\int |C_{\varphi}f(x)|^{p} \ d\mu \leq \ C \ \displaystyle\int |f(x)|^{p} \ d\mu, \ \ 1\leq p<+\infty,
\end{equation}
does not hold in $L^{p(\cdot)}(X)$ unless $p(\varphi(\cdot))=p(\cdot)$ almost everywhere in $X.$ 

Regarding compactness, it is well known that $L^{p}(X)$ with $p\geq1$ does not support compact composition operators if $X$ has not atoms. In this paper, we show that metric measure spaces satisfying a local uniform property as in $(\ref{I2*})$ have no atoms. This allows us to provide a different proof from the constant exponent case, using recent developments on precompact sets on $L^{p(\cdot)}(X)$ obtained in \cite{gorka2015almost}. 

We also study a related class of operators: 
$$T_{\varphi}:f\mapsto f\circ\varphi, \ \ D(T_{\varphi}):=\left\{f\in L^{p(\cdot)}(X): f\circ\varphi\in  L^{(p\circ\varphi)(\cdot)}(X)\right\}.$$
This class $T_{\varphi}$ represents the natural extension to the variable exponent case. Recent studies on boundedness, compactness, and closed-range properties for such operators have been conducted in \cite{bajaj2024composition,datt2024weighted} for bounded exponents defined on complete $\sigma$-finite spaces. 

In this paper, we study the boundedness of $T_{\varphi}$ by providing a simple proof on $L^{p(\cdot)}(X)$, where $X$ is a metric measure space with a doubling measure $\mu$ and unbounded exponents $p(\cdot)$. On the other hand, the non-compactness of $T_{\varphi}$ is obtained when $X$ is a connected space. Not least, in some cases, $T_{\varphi}$ maps $L^{p(\cdot)}(X)$ to $L^{p(\cdot)}(X)$. For example, when  $$T_{\varphi}: L^{p(\cdot)}(X) \rightarrow L^{(p \circ \varphi)(\cdot)}(X) \quad \text{and} \quad L^{(p \circ \varphi)(\cdot)}(X) \hookrightarrow L^{p(\cdot)}(X),$$ the right-hand embedding implies, in particular, that $p(\varphi(x)) \geq p(x)$ a.e. in $x \in X$. Moreover, this note implies the right inequality in (\ref{I3*}). In this sense, the results concerning the operator $C_{\varphi}$ are obtained with a weaker hypothesis by replacing the embedding with regularity of log-continuous type, which we adapt to the environment of metric measure spaces. 

Finally, another property that has not been explored in the framework of spaces with variable integrability is the weak compactness of the operator $(T_{\varphi}$, even in the Euclidean case. In this direction, we show that the operator $T_{\varphi}$ behaves well on weakly compact sets in $L^{p(\cdot)}([0,1])$, and some results are obtained in the non-reflexive setting $p^{-}=1$. In fact, our partial results allow us to state the following conjecture: Let $p^{-}=1$, $$T_{\varphi} \quad \text{is weakly compact on} \quad L^{p(\cdot)}([0,1]) \quad \text{if and only if} \quad \inf_{x\in [0,1]} u_{\varphi}(x)=0.$$

Let us now describe the organization of the article. In Section \ref{preli} we fix the notations and recall the definitions and a few results which will be important in our work. In Section \ref{continuity}, we study continuity and compactness for the operator $C_{\varphi}$. Finally, we study some properties of the operator $T_{\varphi}$ in the Section \ref{properties}.

\section{Preliminaries and notations}\label{preli}
We assume throughout the paper that $\mathcal{B}_{X}$ is the $\sigma$-algebra of Borel generated by $\mu$-measurable open sets in $X$ and the measure $\mu$ of  every open nonempty
set is positive and that the measure of every bounded set is finite on $X$.

\subsection{Doubling measure and \texorpdfstring{$Q$}{lg}-Ahlfors property} We define the property of doubling measure which endows a metric measure space with good properties, for more details see \cite{bjorn2011nonlinear, heinonen2015sobolev}. 

\begin{definition}A measure $\mu$ is said to satisfy the doubling condition if there exists a positive constant $C$ such that $$\mu(B(x,2r))\leq C \ \mu(B(x,r)), \ \ \textrm{for every ball} \ \ B(x,r).$$
\end{definition}
Another fruitful property for a metric measure space is the regular $Q$-Ahlfors property, which in some cases is stronger than the doubling property (see \cite{harjulehto2004variable}). 
\begin{definition} We say that the measure $\mu$ is lower Ahlfors $Q$-regular if there exists a positive 
constant $C$ such that $\mu(B) \leq C \ \textrm{diam}(B)^{Q}$ for every ball $B\subset X$ with $\textrm{diam} B\leq 2 \ \textrm{diam} X.$ We say that $\mu$ is upper Ahlfors $Q$–regular if there exists
a positive constant $C$ such that $\mu(B) \geq C \ \textrm{diam}(B)^{Q}$ for every ball $B \subset X$ with
$\textrm{diam} B \leq 2 \ \textrm{diam} X$. The measure $\mu$ is Ahlfors $Q$–regular if it is upper and
lower Ahlfors $Q$–regular; i.e., if $$\mu(B) \approx \textrm{diam}(B)^{Q} \  \ {\rm{ for \ every\  ball}}\  B \subset X \ {\rm{with}}\  {\textrm{diam}}\ B \leq 2 \ {\textrm{diam}}\ X.$$    
\end{definition}

\subsection{The variable exponents class}

The class of variable exponents, denoted by $\mathcal{P}(X)$, is defined by $$\mathcal{P}(X):=\left\{p:X\to [1,\infty): p(\cdot) \ \textrm{is Borel measurable}\right\}.$$ Given $A\subset X
$ and $p(\cdot)\in \mathcal{P}(X)$ we put $$p^{+}_{A}:= \displaystyle\esssup_{x\in A} p(x)\ \ {\rm{and}} \ \ p^{-}_{A}:= \displaystyle\essinf_{x\in A}p(x).$$ When the domain is clear we simply write $p^{+}= p^{+}(X)$ and $p^{-}=p^{-}(X).$ Some properties of regularity at infinity, relative to variable exponent, are useful for studying composition operators within the framework of non-standard functional spaces. For Euclidean domains and metric measures spaces, these properties are provided respectively in \cite{cruz-uribe2013,diening2011lebesgue} and \cite{gorka2015almost, harjulehto2004variable}. 

\begin{definition}
Let $p(\cdot)\in \mathcal{P}(X)$. We say that 
\begin{enumerate}
    \item $p(\cdot)$ is locally 
log-Hölder continuous, denoted by $p(\cdot)\in LH_{0}(X)$, if there exists a constant $K_0$ such that for all $x,y\in X, \  d(x,y)<\frac{1}{2}$, $$ |p(x)-p(y)|\leq \frac{K_0}{-\log(d(x,y))}. $$

\item $p(\cdot)$ is log-Hölder continuous at infinity with point base $x_0 \in X$, denoted by $p(\cdot) \in LH_{\infty}(X)$, if there exist constants $K_{\infty}$ and $p_{\infty}$ such that for all $x\in X$,
$$|p(x)-p_{\infty}|\leq \frac{K_{\infty}}{\log(e+d(x,x_0))}.$$
\end{enumerate}
When $p(\cdot)$ is log-Hölder continuous both locally and at infinity, we denote this by $p(\cdot)\in LH(X).$  
\end{definition}
For this work, we introduce a class of exponents associated with the $\varphi$-map. Denote by $\mathcal{B}_0$ the set of
all open ball in $X$.

\begin{definition}
Let $\varphi: X\to X$ be a Borel measurable map, we define the following kinds of exponents:
$$\mathcal{P}_{\varphi^{+}}^{\textrm{log}}(X):=\{p(\cdot)\in LH(X): [\varphi]_{p^{+}}\geq 1\},$$ $$\mathcal{P}_{\varphi^{-}}^{\textrm{log}}(X):=\{p(\cdot)\in LH_0(X): [\varphi]_{p^{-}}\leq 1\},$$ $$\mathcal{P}_{\varphi}^{\textrm{log}}(X):=\mathcal{P}_{\varphi^{+}}^{\textrm{log}}(X)\cap \mathcal{P}_{\varphi^{-}}^{\textrm{log}}(X),$$ where
$$[\varphi]_{p^{+}}:=\displaystyle\inf\left\{\frac{p^{+}_{B}}{p_{\varphi^{-1}(B)}^{+}}:   B\in\mathcal{   B}_{0}\right\}, \ \ \ [\varphi]_{p^{-}}:=\displaystyle\sup\left\{\frac{p^{-}_{B}}{p_{\varphi^{-1}(B)}^{-}}:B\in\mathcal{B}_{0}\right\}.$$
\end{definition}
In the following example, we show that $[\varphi]_{p^+}\geq 1$  for countable sub-covers, in fact, this is sufficient for the continuity result that we will obtain below (see Theorem \ref{t2*}).
\begin{example}\label{ex1}
    Let $X:=\mathbb{R}^{+}$ and define $p:\mathbb{R}^{+}\to[1,+\infty)$ a variable exponent given by 
    \begin{equation*}
     p(x):=\left\{\begin{array}{cc}
       1+x^{2},   & x\in(0,1) \\
        
        1+\frac{1}{2x-1},  & x\geq 1 
     \end{array} \right.
        \end{equation*}
    It is not difficult to show that $$0\leq p(x)-1\leq \displaystyle\frac{\ln(1+e)}{\ln(|x|+e)}, \ \ \textrm{for all} \ \ x\in\mathbb{R}^{+}.$$
    That is, $p(\cdot)\in LH_{\infty}(\mathbb{R}^{+}).$ In addition, it is easy see that $p(\cdot)\in LH_{0}(\mathbb{R}^{+});$ for $x\in(0,1)$ and $y\geq1$ such that it follows that $$|p(x)-p(y)|=\displaystyle\left|\frac{2x^{2}y-x^{2}-1}{2y-1}\right|\leq 2x^{2}y-2x^{2}\leq 2 \ (y-1)\leq 2 \ (y-x)=2 \ |x-y|.$$
    Similarly, for $x\geq 1$ and $y\in(0,1).$ This is sufficient to show that $p(\cdot)$ is Lipschitz continuous on $\mathbb{R}^{+}$and therefore $p(\cdot)\in LH_0(\mathbb{R}^{+}).$ Now, 
    consider $\varphi:\mathbb{R}^{+}\to \mathbb{R}^{+}$ be a non-singular map such that  $0\leq\varphi(x)\leq x$  for all $x\in\mathbb{R}^{+}.$ On the other hand, let
    $\epsilon\in(0,+\infty)$ and we consider $$\mathcal{A}:=\{(x-\epsilon,x+\epsilon):x\in\mathbb{R}^{+}\}.$$
    Let $\{q_j\}_{j}\subset\mathbb{Q}^{+}$ be such that $$q_j\to+\infty  \ \ \textrm{as} \ \ j\to+\infty, \ \ \textrm{and} \ \ \mathbb{R}^{+}=\bigcup_{j}I_j, \ \ I_{j}:=(q_j-5\epsilon,q_j+5\epsilon).$$ Then,  by using $LH_{\infty}$-regularity we have 
    \begin{eqnarray*}
        \sup_{x\in\varphi^{-1}(I_j)}p(x)&\leq& \sup_{x\in\varphi^{-1}(I_j)} |p(x)-p_{\varphi}(x)|+\sup_{x\in\varphi^{-1}(I_j)} p_{\varphi}(x) \\
        &\leq& \sup_{x\in\varphi^{-1}(I_j)}\frac{C_{\infty}}{\ln(e+\varphi(x))}+p_{I_j}^{+} \\
        &\leq& \frac{C_{\infty}}{\ln(e+q_{j}-\epsilon)}+ p_{I_j}^{+}. 
    \end{eqnarray*}
    Hence, since $q_j\rightarrow+\infty$ as $j\to +\infty$ $$\limsup_{j} p_{\varphi}^{+}(I_j)\leq \limsup_{j} p^{+}(I_j)<+\infty, \ \ \textrm{as} \ \ j\to+\infty.$$
    Therefore,  $$0\leq  p^{+}(I_{q_{n_j}})- p_{\varphi}^{+}(I_{q_{n_{j}}}), \ \ \textrm{for many infinite} \ \ n_{j}\in \mathbb{N} .$$
    Finally, in particular note that $L^{p(\cdot)}(\mathbb{R}^{+})$ is not embedded in $L^{p_{\varphi}(\cdot)}(\mathbb{R}^{+})$ and $p(\cdot)\in \mathcal{P}_{\varphi^+}^{\log}(\mathbb{R}^{+})$ (in the sense of accounting coverages).
\end{example}



\subsection{Variable integrability spaces} 
The space $L^{p(\cdot)}(X)$ is the classical variable Lebesgue space on $X$, some of its basic properties have been studied, for instance in \cite{harjulehto2004variable}. 

\begin{definition} Let $p(\cdot)\in \mathcal{P}(X)$ and $L_{0}(X)$ the set of measurable functions on $X.$
 The \textit{Lebesgue space with variable exponent} $L^{p(\cdot)}(X)$ is defined by $$L^{p(\cdot)}(X):=\left\{ f\in L_{0}(X):\rho\left(\frac{f}{\eta}\right)<\infty \ {\rm{for \ some \ \eta >0}}\right\},$$
\end{definition}
where $$\rho(f):=\displaystyle\int_{X}|f(x)|^{p(x)} d\mu(x),$$ is called the \textit{associated modular} with $p(\cdot)$. It is known that $L^{p(\cdot)}(X)$ is a Banach space equipped with the \textit{Luxemburg-Nakano norm} $$\left\|f\right\|_{p(\cdot)}:=\inf\left\{\eta>0: \rho\left(\frac{f}{\eta}\right)\leq 1\right\}.$$
Additionally, when $p(\cdot)$ is bounded we obtain the inequality that provides a relation between the norm and the modular:
\begin{equation}\label{eq2}
\min\left\{\left\|f\right\|_{p(\cdot)}^{p^{+}}, \left\|f\right\|_{p(\cdot)}^{p^{-}}\right\} \leq \rho\left(f\right)\leq \max \left\{\left\|f\right\|_{p(\cdot)}^{p^{+}}, \left\|f\right\|_{p(\cdot)}^{p^{-}}\right\},
\end{equation}
for $f\in L^{p(\cdot)}(X).$ Also, in the case $p^{+}$ is finite the space $L^{p(\cdot)}(X)$ can be defined as all measurable functions $f:X\rightarrow\mathbb{R}$ such that $\rho(f)<+\infty$ and the dual space is identified with $L^{p'(\cdot)}(X)$ where $p'(\cdot)$ is the conjugate exponent relative to $p(\cdot)$, that is, $$\displaystyle\frac{1}{p'(x)}+\frac{1}{p(x)}=1, \ \ x\in X\setminus\{x:p(x)=1\}.$$
It is well-know that the space of bounded essentially functions with compact support $L^{p(\cdot)}_{c}(X)$ is dense in $L^{p(\cdot)}(X).$

\section{Some properties for   $C_{\varphi}$}\label{continuity}
In this section, we start analyzing the behavior and properties of the operator $C_{\varphi}$ on $L^{p(\cdot)}(X)$ based on the following results in the space $L^{p}(X):$
\begin{enumerate}
    \item[$(P_1)$] $C_{\varphi}:L^{p}(X)\rightarrow L^{p}(X)$ is bounded if and only if $u_{\varphi}\in L^{\infty}(X).$ In particular,
    $$\displaystyle\int_{X}|(C_{\varphi}f)(x)|^{p} \ d\mu(x) \ \leq \|u_{\varphi}\|_{\infty} \ \int_{X} |f(x)|^{p} \ d\mu(x), \ \textrm{for all} \ f\in L^{p}(X).$$
\end{enumerate}
 In the second part we completely characterize the continuity and compactness of operator $C_{\varphi}$ on $L^{p(\cdot)}(X).$  
\subsection{On the property $(P_1)$ in $L^{p(\cdot)}(X)$} 
Our first result shows that the integral inequality does not hold in general for any variable exponent, unless the exponent is both bounded and invariant under dilation's or contractions induced by $\varphi.$ The variable exponent induced by $\varphi$ is denoted by $p_{\varphi}(\cdot):=p(\varphi(\cdot)).$ Additionally we associate  to $\varphi$ the number $$\mathcal{U}(\varphi):=\sup_{B\in\mathcal{B}_{0}}\displaystyle\frac{\mu(\varphi^{-1}(B))}{\mu(B)}, \ {\rm{where}} \ \mathcal{B}_{0}:=\left\{B\in\mathcal{B}_{X}: B \ \textrm{is a ball in} \ X\right\}.$$  
  
\begin{theorem}\label{t1} Assume $p(\cdot)\in\mathcal{P}(X)$ with $p^{+}<+\infty$  and let $\varphi:X\rightarrow X$ a non-singular measurable transformation. Then, the following statements are equivalent:
  \begin{enumerate}
      \item[(M1)] There exists a constant $C>0$ such that 
      \begin{equation}\label{m1}
      \displaystyle\int_{X}|(C_{\varphi}f)(x)|^{p(x)} \ d\mu \ \leq C \ \int_{X} |f(x)|^{p(x)} \ d\mu, \ \textrm{for all} \ f\in L^{p(\cdot)}(X).
      \end{equation}
      \item[(M2)]  The function $u_{\varphi}:X\rightarrow\mathbb{R}$ is essentially bounded on $X$, and $$p(x)=p_{\varphi}(x), \  \ \textrm{a.e. in x} \in X.$$ 
    \end{enumerate}
 \end{theorem}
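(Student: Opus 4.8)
The plan is to prove the two implications separately; $(M2)\Rightarrow(M1)$ is soft, while $(M1)\Rightarrow(M2)$ carries the work. For $(M2)\Rightarrow(M1)$ the tool is the change-of-variables identity attached to \eqref{I1*}: for every Borel $g\colon X\to[0,\infty]$,
\[
\int_X g(\varphi(x))\,d\mu(x)=\int_X g(y)\,u_{\varphi}(y)\,d\mu(y),
\]
obtained by checking it on characteristic functions (which is exactly \eqref{I1*}), extending to simple functions by linearity, and passing to the limit by monotone convergence. Given (M2), for $f\in L^{p(\cdot)}(X)$ the equality $p=p_\varphi$ a.e.\ turns $|f\circ\varphi|^{p(\cdot)}$ into $\bigl(|f|^{p(\cdot)}\bigr)\circ\varphi$ a.e., so applying the identity with $g=|f|^{p(\cdot)}$ yields $\rho(C_\varphi f)=\int_X|f|^{p(\cdot)}\,u_\varphi\,d\mu\le\|u_\varphi\|_\infty\,\rho(f)$, which is \eqref{m1} with $C=\|u_\varphi\|_\infty$.

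For $(M1)\Rightarrow(M2)$ I would first recover $u_\varphi\in L^\infty(X)$: testing \eqref{m1} on $f=\chi_E$ with $E$ Borel of finite measure (such $\chi_E$ is in $L^{p(\cdot)}(X)$ since $\rho(\chi_E)=\mu(E)<\infty$) and using $C_\varphi\chi_E=\chi_{\varphi^{-1}(E)}$ together with \eqref{I1*} gives $\int_E u_\varphi\,d\mu=\mu(\varphi^{-1}(E))\le C\,\mu(E)$; since $X$ is $\sigma$-finite --- it is exhausted by the balls $B(x_0,n)$, each of finite measure --- this forces $u_\varphi\le C$ a.e. It remains to show $p=p_\varphi$ a.e. Suppose not; then $\mu(\{p_\varphi>p\})>0$ or $\mu(\{p_\varphi<p\})>0$, and I describe the first case, the second being identical with $t\to+\infty$ in place of $t\to0^+$. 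Pick $\delta>0$ with $\mu(\{p_\varphi\ge p+\delta\})>0$; partitioning this set by which interval $[1+k\delta/2,\,1+(k+1)\delta/2)$ the value $p(x)$ lies in, and then intersecting with a large ball, one obtains a bounded Borel set $A$ with $\mu(A)>0$ and a level $a\ge1$ such that $p\le a+\tfrac{\delta}{2}$ and $p_\varphi\ge a+\delta$ throughout $A$. The delicate point --- delicate because $\varphi$ is only Borel, so the image $\varphi(A)$ need not be measurable --- is to produce a finite-measure test set on the target side whose $\varphi$-preimage still meets $A$ in positive measure: since $A\subset\varphi^{-1}(\{p\ge a+\delta\})$, exhaust the super-level set $\{p\ge a+\delta\}$ by balls and choose a ball $B$ with $\mu\bigl(\varphi^{-1}(E)\cap A\bigr)>0$, where $E:=\{p\ge a+\delta\}\cap B$; this $E$ is Borel, of finite measure, and carries $p\ge a+\delta$.

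Feeding $f=t\chi_E$ with $0<t<1$ into \eqref{m1} and using $p\le a+\tfrac{\delta}{2}$ on $\varphi^{-1}(E)\cap A$ together with $p\ge a+\delta$ on $E$ (so that, $s\mapsto t^{s}$ being decreasing, $t^{p}\ge t^{a+\delta/2}$ on the former and $t^{p}\le t^{a+\delta}$ on the latter), one gets
\[
t^{\,a+\delta/2}\,\mu\bigl(\varphi^{-1}(E)\cap A\bigr)\;\le\;\int_{\varphi^{-1}(E)}t^{p(x)}\,d\mu\;\le\;C\int_{E}t^{p(x)}\,d\mu\;\le\;C\,t^{\,a+\delta}\,\mu(E),
\]
hence $\mu\bigl(\varphi^{-1}(E)\cap A\bigr)\le C\,t^{\delta/2}\mu(E)\to0$ as $t\to0^+$, contradicting $\mu\bigl(\varphi^{-1}(E)\cap A\bigr)>0$. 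Thus $p=p_\varphi$ a.e., and together with $u_\varphi\in L^\infty(X)$ this is (M2). I expect the only genuinely delicate step to be the localization in the second paragraph --- extracting from $\mu(\{p\neq p_\varphi\})>0$ a configuration with separated exponent levels on $E$ and on $\varphi^{-1}(E)\cap A$ and with $\mu(E)<\infty$, using only preimages of level sets of $p$; the rest is change of variables and elementary modular estimates.
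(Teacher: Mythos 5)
Your proof is correct, and for the hard implication $(M1)\Rightarrow(M2)$ it takes a genuinely different route from the paper. The paper tests \eqref{m1} against $f_n=n^{1/p(\cdot)}\mathcal{X}_{B_y}$, locates a suitable ball via a Lebesgue density-point lemma (\cite[Lemma~3.3.31]{heinonen2015sobolev}) applied to $E_{\varphi}=\{p>p_{\varphi}\}$, and then extracts the conclusion $\int_{E_\varphi\cap\varphi^{-1}(B_y)}(p/p_\varphi-1)\,d\mu=0$ from Jensen's inequality applied to $s\mapsto n^{s}$ as $n\to\infty$. You instead discretize the exponent into levels of width $\delta/2$ (finitely many, since $p^{+}<\infty$), pin down a bounded set $A$ on which $p\le a+\delta/2$ and $p_\varphi\ge a+\delta$, build the target-side test set $E$ from a super-level set of $p$ intersected with a ball (correctly sidestepping the non-measurability of $\varphi(A)$ by working only with preimages of level sets), and then let the single parameter $t\to0^{+}$ in $f=t\chi_E$ to force $\mu(\varphi^{-1}(E)\cap A)=0$. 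Your version is more elementary --- it needs neither Jensen nor the density-point machinery, only $\sigma$-finiteness and continuity from below --- and it makes the quantitative separation of exponents explicit, at the cost of a slightly longer localization step; the paper's version is shorter on the page but leans on the cited lemma and is notationally looser about whether it is intersecting with $B_y$ or with $\varphi^{-1}(B_y)$. Both proofs treat only one of the two cases ($p>p_\varphi$ versus $p<p_\varphi$) in detail and declare the other symmetric, and both recover $u_\varphi\in L^{\infty}(X)$ the same way, by testing on characteristic functions; your explicit change-of-variables derivation of $(M2)\Rightarrow(M1)$ fills in what the paper dismisses as clear.
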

\begin{proof} It is clear that $(M.2) \Rightarrow(M.1)$ with $C:=\|u_{\varphi}\|_{\infty}.$ To prove $(M.1) \Rightarrow(M.2),$ first suppose that $p^{+}<\infty;$  since $\varphi$ is non-singular we have $p\circ\varphi\in L^{\infty}(X)$ so $$\displaystyle\frac{p_{\varphi}(\cdot)}{p(\cdot)}\in L^{\infty}(X).$$ 
  Assume that $(M.1)$ holds, so for $B\in \mathcal{B}_{0},$ with $\mu(B)<+\infty.$  Considering the function $f= \mathcal{X}_{B} \in L^{p(\cdot)}(X),$ from inequality (\ref{m1}) we deduce that
  \begin{equation}\label{m2}
   \frac{C \ \mu(B)}{\mu[\varphi^{-1}(B)]}\geq 1. 
   \end{equation}
  Now, let $\Omega_{\varphi}:=\{x\in X: p(x)\neq p_{\varphi}(x)\}$ and suppose that $\mu(\Omega_{\varphi})>0.$ Thus, if $E_{\varphi}:=\{x\in X:p(x)>p_{\varphi}(x)\}\in\mathcal{B}$,  then by $\sigma$-additivity we have
  $$\mu(E_{\varphi})>0 \ \ \textrm{or} \ \ \mu(\Omega_{\varphi}\setminus E_{\varphi})>0.$$ So, we see only the case $\mu(E_{\varphi})>0$ because the case $\mu(\Omega_{\varphi} \setminus E_{\varphi})>0$ is analogue with minor settings. In fact, since $\mu(E_{\varphi})>0$ then by \cite[Lemma~3.3.31]{heinonen2015sobolev} we can take $y\in E_{\varphi}$ such that  satisfies $$\mu(E_{\varphi}\cap B_{y})>0, \ \textrm{ for every ball} \ \ B_{y}\in \mathcal{B}_{0}.$$ Hence, since $\varphi^{-1}(X)=X$ from (\ref{m2}) we can choose $B_{y}\in\mathcal{B}_{0}$ (sufficiently rate large) such that 
  \begin{equation}\label{m3}
  0<\mu(E_{\varphi}\cap \varphi^{-1}(B_{y}))<+\infty.
  \end{equation}
  Hence, using (\ref{m1}) with the functions $f_{n}(\cdot)=n^{\frac{1}{p(\cdot)}} \mathcal{X}_{B_{y}}(\cdot)\in L^{p(\cdot)}(X),$ we obtain that  
  \begin{eqnarray*}
\displaystyle\int_{E_{\varphi}\cap\varphi^{-1}(B_{y})} |n|^{\frac{p(x)}{p_{\varphi}(x)}} \ d\mu&=& 
\int_{X}|(f_{n}\circ\varphi)(x)|^{p(x)} \ d\mu \\
&\leq& C \ \int_{X}|f_{n}(x)|^{p(x)} \ d\mu \\
&=& C \  n \ \mu(B_{y}).  
\end{eqnarray*}
Thus, $$\displaystyle\int_{E_{\varphi}\cap \varphi^{-1}(B_{y})} |n|^{\frac{p(x)}{p_{\varphi}(x)}-1} \ d\mu \leq C \ \mu(B_{y}).$$
By the \textit{classical Jensen Inequality} we have $$ \displaystyle\int_{E_{\varphi}\cap\varphi^{-1}(B_{y})} \left(\frac{p(x)}{p_{\varphi}(x)}-1\right) \ d\mu \leq \log^{-1}(n) \ \mu(E_{\varphi}\cap\varphi^{-1}(B_{y}))\log\left(\frac{C \ \mu(B_{y})} {\mu(E_{\varphi}\cap\varphi^{-1}(B_{y}))}\right)$$
so, taking $n\rightarrow+\infty$ $$\displaystyle\int_{E_{\varphi}\cap\varphi^{-1}(B_{y})} \left(\frac{p(x)}{p_{\varphi}(x)}-1\right) \ d\mu=0.$$ Therefore, $\mu(E_{\varphi}\cap\varphi^{-1}(B_{y}))=0$ which is in contradiction with (\ref{m3}). Consequently, $\mu(E_{\varphi})=0.$ Besides, $u_{\varphi}\in L^{+\infty}(X)$ follows from (\ref{m2}) with $B=E\in \mathcal{B}_{X}.$ 
\end{proof}

\subsection{Continuity for \texorpdfstring{$C_{\varphi}$}{lg}}
In this section, we extend the classical continuity result in standard $L^{p}$ spaces to function spaces with variable integrability $L^{p(\cdot)}(X)$. The proof strategy is inspired by the argument given by Cruz-Uribe and Fiorenza on the boundedness of the maximal operator in $L^{p(\cdot)}(X)$ when $X$ is a Euclidean domain (see \cite[Theorem~3.16]{cruz-uribe2013}). For this purpose, we adapt \cite[Lemma~3.26, Lemma~3.24]{cruz-uribe2013} to the setting of metric measure spaces with Ahlfors regularity; see also \cite{harjulehto2004variable}.
 
\begin{lemma}\label{l1}
Let $\varphi:X\rightarrow X$ be non-singular Borel map. Then, 
\begin{itemize}
   \item [(i)] If $p(\cdot)\in LH_{0}(X)$ and $\mu$ is lower Ahlfors $Q$-regular then there exists a positive constant $C$ such that 
    $$\mu(B)^{p(\varphi(x))-p^{+}_{B}}\leq C, \ \ \textrm{for all} \ \  B\in\mathcal{B}_{0} \ \textrm{and} \  x\in\varphi^{-1}(B).$$
    \item [(ii)] If $p(\cdot)\in LH_{\infty}(X)$ and $\mu$ is upper Ahlfors $Q$-regular then there exist positive constants $C_1, C_2$ such that for all function $f$ with $0\leq f\leq 1$ on $E\in \mathcal{B}$ we have $$\displaystyle\int_{\varphi^{-1}(E)}|f(\varphi(x))|^{p_{\infty}} \ dx\leq C_{1}\int_{\varphi^{-1}(E)}|f(\varphi(x))|^{p(x)} \ dx +C_2.$$
    Similarly, $$\displaystyle\int_{\varphi^{-1}(E)}|f(\varphi(x))|^{p(x)} \ dx\leq C_{1}\int_{\varphi^{-1}(E)}|f(\varphi(x))|^{p_{\infty}} \ dx +C_2.$$
    \end{itemize}
\end{lemma}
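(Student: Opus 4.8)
The plan is to transplant to the metric–measure setting the two oscillation estimates underlying the Hardy–Littlewood maximal inequality for variable exponents, namely \cite[Lemma~3.24, Lemma~3.26]{cruz-uribe2013}; the only new ingredient is that the exponent is evaluated at $\varphi(x)$ while the ball governing its oscillation is a ball $B\ni\varphi(x)$. Non-singularity of $\varphi$ guarantees that $\varphi(x)\in B$ for $\mu$-a.e.\ $x\in\varphi^{-1}(B)$, so that $p(\varphi(x))\le p^{+}_{B}$ a.e.\ and all the pointwise estimates below are legitimate $\mu$-a.e.; this a.e.\ form is exactly what is needed for the continuity of $C_{\varphi}$ in Theorem~\ref{t2*}.

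For (i), fix $B\in\mathcal{B}_{0}$ and $x\in\varphi^{-1}(B)$ and note that the exponent $p(\varphi(x))-p^{+}_{B}$ is $\le 0$. If $\mu(B)\ge 1$ the quantity is $\le 1$, so suppose $\mu(B)<1$ and write $\mu(B)^{p(\varphi(x))-p^{+}_{B}}=\exp\!\big((p^{+}_{B}-p(\varphi(x)))\log(1/\mu(B))\big)$. When $\diam B<1/2$, applying the $LH_{0}$ bound to $\varphi(x)$ and to points of $B$ where $p$ is close to $p^{+}_{B}$ gives $p^{+}_{B}-p(\varphi(x))\le K_{0}/\log(1/\diam B)$, while Ahlfors $Q$-regularity of $\mu$ gives $\log(1/\mu(B))\le C_{Q}\big(1+\log(1/\diam B)\big)$; the product of these two bounds is independent of $B$, so $\mu(B)^{p(\varphi(x))-p^{+}_{B}}\le C$. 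The remaining balls have $\diam B\ge 1/2$, and there Ahlfors regularity keeps $\mu(B)$ bounded away from $0$, so the estimate is immediate (using $p^{+}<\infty$ if one wants the exponent bounded as well).

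For (ii), set $g:=f\circ\varphi$, so $0\le g\le 1$ on $\varphi^{-1}(E)$ because $\varphi$ maps $\varphi^{-1}(E)$ into $E$; fix the base point $x_{0}$ of the $LH_{\infty}$ condition and put $w(x):=(e+d(x,x_{0}))^{-Qp_{\infty}}$. The heart of the argument is a pointwise dichotomy valid for every $x$: if $g(x)\ge w(x)$ then $1/g(x)\le (e+d(x,x_{0}))^{Qp_{\infty}}$, hence $g(x)^{\pm(p_{\infty}-p(x))}\le (e+d(x,x_{0}))^{Qp_{\infty}|p_{\infty}-p(x)|}\le e^{Qp_{\infty}K_{\infty}}$ by $LH_{\infty}$, so $g(x)^{p_{\infty}}\le e^{Qp_{\infty}K_{\infty}}g(x)^{p(x)}$ and, symmetrically, $g(x)^{p(x)}\le e^{Qp_{\infty}K_{\infty}}g(x)^{p_{\infty}}$; if instead $g(x)<w(x)$ then, since $g(x)\le 1$ and $p\ge 1$, both $g(x)^{p_{\infty}}\le g(x)< w(x)$ and $g(x)^{p(x)}\le g(x)< w(x)$. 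Combining the two regimes yields $g(x)^{p_{\infty}}\le e^{Qp_{\infty}K_{\infty}}g(x)^{p(x)}+w(x)$ pointwise (and the same with $p_{\infty}$ and $p(x)$ interchanged). Integrating over $\varphi^{-1}(E)$ and noting that $\int_{X}w\,d\mu<\infty$ — decompose $X$ into dyadic annuli centred at $x_{0}$, whose $\mu$-measures are $\lesssim(\text{radius})^{Q}$ by Ahlfors regularity, so that the series $\sum_{k}2^{kQ(1-p_{\infty})}$ converges since $p_{\infty}>1$ — gives both inequalities with $C_{1}=e^{Qp_{\infty}K_{\infty}}$ and $C_{2}=\int_{X}w\,d\mu$.

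The main obstacle is to reconcile the metric character of the regularity hypotheses — $LH_{0}$ and $LH_{\infty}$ control the oscillation of $p$ through distances — with the measure-theoretic form of the conclusions: powers of $\mu(B)$ in (i), integrals against $d\mu$ in (ii). Ahlfors $Q$-regularity is precisely the bridge: in (i) it converts $\log(1/\diam B)$ into $\log(1/\mu(B))$ up to a fixed multiplicative constant, so the $LH_{0}$ gain $1/\log(1/\diam B)$ exactly absorbs the logarithmic loss produced by $\mu(B)$; in (ii) it is what makes the slowly decaying tail $w$ $\mu$-integrable, and this is the only role played by the "dimension" $Q$. A subsidiary point to watch throughout is the gap between the essential suprema $p^{\pm}_{B}$ and the pointwise value $p(\varphi(x))$; non-singularity of $\varphi$, together with the density of the "good" points in any set of positive measure (as in \cite[Lemma~3.3.31]{heinonen2015sobolev}), closes this gap $\mu$-a.e., which suffices for all the applications.
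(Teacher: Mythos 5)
Your argument follows essentially the same route as the paper: part (i) is the Cruz-Uribe--Fiorenza diagonal estimate with the exponent evaluated at $\varphi(x)\in B$ and Ahlfors regularity converting $\log(1/\operatorname{diam}B)$ into $\log(1/\mu(B))$, and part (ii) is the standard two-regime comparison of $g^{p(x)}$ with $g^{p_{\infty}}$ against an integrable majorant; the paper phrases the dichotomy as a decomposition $\varphi^{-1}(E)=F_{1}\cup F_{2}$ rather than pointwise, but that is the same argument.

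There is, however, one concrete step in (ii) that fails as written: you take the majorant $w(x)=(e+d(x,x_{0}))^{-Qp_{\infty}}$ and justify $\int_{X}w\,d\mu<\infty$ by the convergence of $\sum_{k}2^{kQ(1-p_{\infty})}$, ``since $p_{\infty}>1$''. But $p_{\infty}>1$ is not among the hypotheses --- the exponent only satisfies $p(\cdot)\geq 1$, and $p_{\infty}=1$ actually occurs in the paper's own Example \ref{ex1}. In that case $w=(e+d(\cdot,x_{0}))^{-Q}$ is not $\mu$-integrable and your constant $C_{2}$ is infinite. The repair is immediate and is what the paper does: decouple the decay rate from $p_{\infty}$ by taking $h_{x_{0}}(x)=(e+d(x,x_{0}))^{-r}$ with $r$ any number satisfying $rp^{-}>Q$ (e.g.\ $r=Q+1$); the $LH_{\infty}$ bound still gives $h_{x_{0}}(x)^{-|p(x)-p_{\infty}|}\leq e^{rK_{\infty}}$ for any fixed $r$, and on the set where $g<h_{x_{0}}$ one bounds $g^{p_{\infty}}\leq g^{p^{-}}\leq h_{x_{0}}^{p^{-}}$, which is integrable by the dyadic-annulus computation. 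With that one-line change your proof is complete. (A smaller point, which you already flag and which the paper also elides: for balls with $\operatorname{diam}B\geq 1/2$ one needs $p^{+}<\infty$, or $\mu(B)\geq 1$, to dispose of part (i); in the application $p(\cdot)\in LH(X)$ forces $p^{+}<\infty$, so this is harmless.)
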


\begin{proof} 

$(i)$ Since $p(\varphi(x))-p^{+}_{B}\leq0$ for $x\in \varphi^{-1}(B)$, it suffices to check $(i)$ for balls $B:=B(v,r)$ with $r\leq 1/2.$   For $y_{0}\in B$ such that $$p_{B}^{+}\leq p(y_0)+\frac{1}{\log(1/2r)},$$  from $p(\cdot)\in LH(X)$ it follows that $$p^{+}_{B}-p(\varphi(x))\leq |p(\varphi(x))-p(y_{0})|+1\leq \frac{C_{0}}{-\log d(\varphi(x),y_{0})}\leq \frac{C_{0}}{\log(1/2r)}.$$
So, $$\log (2r)^{p(\varphi(x))-p_{B}^{+}}\leq C_{0}, \ \ x\in\varphi^{-1}(B).$$
Therefore, since $\mu$ is lower Ahlfors $Q$-regular and $p(\varphi(x))-p_{B}^{+}\leq 0$ we have  $$\mu(B)^{p(\varphi(x))-p_{B}^{+}}\leq (Cr^{Q})^{p(\varphi(x))-p_{B}^{+}} 
    \leq C \ r^{Q[{p(\varphi(x))-p_{B}^{+}]}}\leq C_{Q}.$$
    $(ii)$ For $x_{0}\in X,$ define the function $h_{x_0}: X\rightarrow\mathbb{R}$ given by $h_{x_0}(x):=(e+d(x,x_{0}))^{-r}, \ \  x\in X.$ First, we show that $$h_{x_{0}}^{p^{-}}\in L^{1}(X) \ \ \textrm{provided that} \ \ rp^{-}\in(q,+\infty).$$ Indeed, for the base point $x_0\in X$ we consider the countable collection $\{C_{j}: j\in\mathbb{N}\}$ where, $$C_{j}:=B(x_0,2^{j})\setminus B(x_0,2^{j}-1)\in \mathcal{B}_{0}, \ \ 
    \textrm{for each} \ \ j\in\mathbb{N}.$$    Hence, 
    \begin{eqnarray*}
        \displaystyle\int_{X} |h_{x_0}(x)|^{p^{-}} \ d\mu &=& \sum_{j=1}^{+\infty} \int_{C_j} |h_{x_0}(x)|^{p^{-}} \ d\mu \\
        &=& \sum_{j=1}^{+\infty} \int_{B_{2^{j}}\setminus B_{2^{j}-1}} \left(\frac{1}{e+d(x,x_0)}\right)^{rp^{-}} \ d\mu\\
        &\leq& \sum_{j=1}^{+\infty} \int_{B_{2^{j}}\setminus B_{2^{j}-1}} 2^{-jrp^{-}} d\mu\\
        &\leq& \sum_{j=1}^{+\infty} \mu(B_{2^{j}}) \  2^{-jrp^{-}} \\
        &\leq& 4^{q}\sum_{j=1}^{+\infty}    2^{(q-rp^{-})j}<+\infty \ \ \ (rp^{-}\in (q,+\infty)). \end{eqnarray*}
    On the other hand, decompose $\varphi^{-1}(E):=F_{1}\cup F_{2}$ where $$F_{1}:=\{x:f(\varphi(x))\leq h_{x_{0}}(x)\}, \ \ \ \ F_{2}:=\{x:f(\varphi(x))>h_{x_0}(x)\}.$$
    On $F_{1} $ we have, $$\displaystyle\int_{F_1} f(\varphi(x))^{p_{\infty}} \ 
 d\mu\leq \int_{F_1} f(\varphi(x))^{p^{-}}  \ 
 d\mu \leq \int_{F_1} h_{x_0}(x)^{p^{-}} \ d\mu.$$
  By the $LH_{\infty}$-regularity, 
  $$h_{x_{0}}(x)^{-|p(x)-p_{\infty}|}\leq \exp{(r\log(e+d(x,x_0))|p(x)-p_{\infty}|)}\leq \exp{(rC_{\infty})}. $$
    Finally, since $f(\varphi(x))\leq 1$ we get,
    $$\displaystyle\int_{F_1} f(\varphi(x))^{p_{\infty}} \ d\mu \leq \int_{F_1} f(\varphi(x))^{p(x)} \ h_{x_0}(x)^{-|p(x)-p_{\infty}|} \ d\mu$$
    $$\leq \exp{(rC_{\infty})}\int_{F_1} f(\varphi(x))^{p(x)} \ d\mu.$$
\end{proof}

With the previous lemma, the main result of this work in response to Question \ref{Q1} is the following.

\begin{theorem}\label{t2*}
Assume that $\mu$ is a $Q$-Ahlfors regular measure on $X$, let $\varphi: X \rightarrow X$ be a non-singular map. Then:
\begin{enumerate}
    \item[$(c1)$] Let $p(\cdot) \in \mathcal{P}_{\varphi^{+}}^{\textrm{log}}(X).$ If the map $u_{\varphi}$ is essentially bounded on $X$, then the operator $C_{\varphi}$ maps $L^{p(\cdot)}(X)$ into itself.
    \item[$(c2)$] Let $p(\cdot) \in \mathcal{P}_{\varphi^-}^{\textrm{log}}(X).$ If the operator $C_{\varphi}$ maps $L^{p(\cdot)}(X)$ into itself, then there exists a positive constant $C$ such that $\mu(\varphi^{-1}(B)) \leq C \, \mu(B)$ for every ball $B \in \mathcal{B}_{0}$, i.e.,  
    \[
    \mathcal{U}(\varphi) < +\infty.
    \]
    \item[$(c3)$] Assume $p(\cdot) \in \mathcal{P}_{\varphi}^{\textrm{log}}(X).$ If, in addition, $\mu$ is a doubling measure on $X$, then $(c1)$ and $(c2)$ are equivalent.
\end{enumerate}
\end{theorem}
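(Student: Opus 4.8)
I would prove $(c1)$ and $(c2)$ separately and then obtain $(c3)$ by chaining the implications together with the Lebesgue differentiation theorem. For $(c1)$ the target is the modular estimate $\rho(C_{\varphi}f)\le K$ for every $f$ with $\rho(f)\le 1$; since $p(\cdot)\in LH_{\infty}(X)$ forces $p^{+}\le p_{\infty}+K_{\infty}<\infty$, inequality \eqref{eq2} then upgrades this to the operator bound $\|C_{\varphi}f\|_{p(\cdot)}\le\max\{K^{1/p^{+}},K^{1/p^{-}}\}\,\|f\|_{p(\cdot)}$, which is the claim. Fix such an $f\ge 0$. As a $Q$-Ahlfors regular measure is doubling, $X$ admits a countable family of balls $\{B_{j}\}_{j}$ of radius $\le 1/4$ covering $X$ with bounded overlap $N$, and membership of $p(\cdot)$ in $\mathcal{P}_{\varphi^{+}}^{\mathrm{log}}(X)$ — in the countable-cover sense isolated in Example~\ref{ex1} — allows us to choose this family with $p^{+}_{\varphi^{-1}(B_{j})}\le p^{+}_{B_{j}}$ for all $j$. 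Since $\sum_{j}\chi_{\varphi^{-1}(B_{j})}(x)=\sum_{j}\chi_{B_{j}}(\varphi(x))$ lies between $1$ and $N$, we get $\rho(C_{\varphi}f)\le\sum_{j}\int_{\varphi^{-1}(B_{j})}|f(\varphi(x))|^{p(x)}\,d\mu$, and I would estimate each summand by separating the regions $\{|f\circ\varphi|\le 1\}$ and $\{|f\circ\varphi|>1\}$.

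On the set $\{|f|\le 1\}$ the cover is not needed: Lemma~\ref{l1}(ii) with $E=X$ trades the exponent $p(x)$ for the constant $p_{\infty}$ up to a fixed $L^{1}$-error; the change of variables $d(\mu\circ\varphi^{-1})=u_{\varphi}\,d\mu$ together with $u_{\varphi}\in L^{\infty}(X)$ reduces the main term to $\|u_{\varphi}\|_{\infty}\int_{X}|f|^{p_{\infty}}\chi_{\{|f|\le 1\}}\,d\mu$, and a final application of Lemma~\ref{l1}(ii) with $\varphi=\mathrm{id}$ bounds this by $C_{1}\rho(f)+C_{2}$. On the set $\{|f|>1\}$ one first uses $p(x)\le p^{+}_{\varphi^{-1}(B_{j})}\le p^{+}_{B_{j}}$ to push the exponent onto $B_{j}$; the discrepancy $p^{+}_{B_{j}}-p(y)\le p^{+}_{B_{j}}-p^{-}_{B_{j}}$ is controlled by local log-Hölder continuity, and Lemma~\ref{l1}(i) — which, via lower Ahlfors regularity, says precisely that $\mu(B_{j})^{p(y)-p^{+}_{B_{j}}}$ is bounded — is used to absorb the factor $|f(y)|^{p^{+}_{B_{j}}-p(y)}$ once $f$ is decomposed at the scale $\mu(B_{j})^{-1}$; the change of variables again introduces $\|u_{\varphi}\|_{\infty}$, and one sums over $j$ using the bounded overlap and $\rho(f)\le 1$. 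I expect this last region — the contribution of the large values of $f$ — to be the main obstacle, since it is the one place where the local exponent control $[\varphi]_{p^{+}}\ge 1$, the log-Hölder regularity, and the Ahlfors geometry of $\mu$ must all be combined at once, and it is where the composition operator reproduces the mechanism of the Cruz-Uribe and Fiorenza proof of boundedness of the maximal operator in \cite[Theorem~3.16]{cruz-uribe2013}.

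For $(c2)$ the plan is to feed the boundedness of $C_{\varphi}$ the normalized bumps $f_{B}:=\mu(B)^{-1/p(\cdot)}\chi_{B}$, for which $\rho(f_{B})=1$, hence $\|f_{B}\|_{p(\cdot)}\le 1$, and $C_{\varphi}f_{B}=\mu(B)^{-1/p_{\varphi}(\cdot)}\chi_{\varphi^{-1}(B)}$. For a ball $B$ of radius $\le 1/4$ and $x\in\varphi^{-1}(B)$, the exponent $p_{\varphi}(x)$ lies in $[p^{-}_{B},p^{+}_{B}]$, while $[\varphi]_{p^{-}}\le 1$ gives $p(x)\ge p^{-}_{\varphi^{-1}(B)}\ge p^{-}_{B}$; local log-Hölder continuity pins $p^{+}_{B}$ to $p^{-}_{B}$, so $p(x)/p_{\varphi}(x)\ge 1-\varepsilon_{B}$ with $\varepsilon_{B}\to 0$ as the radius shrinks, and $\mu(B)^{-\varepsilon_{B}}$ stays bounded by Ahlfors regularity as in Lemma~\ref{l1}(i). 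Reading the modular of $C_{\varphi}f_{B}$ from below with these estimates converts the norm inequality $\|C_{\varphi}f_{B}\|_{p(\cdot)}\le\|C_{\varphi}\|$ into $\mu(\varphi^{-1}(B))\le C\,\mu(B)$ for every ball of radius $\le 1/4$; covering an arbitrary ball by such small balls with bounded overlap and using the doubling property propagates the inequality to all of $\mathcal{B}_{0}$, whence $\mathcal{U}(\varphi)<+\infty$.

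Finally, $(c3)$ is an assembly step. Since $\mathcal{P}_{\varphi}^{\mathrm{log}}(X)=\mathcal{P}_{\varphi^{+}}^{\mathrm{log}}(X)\cap\mathcal{P}_{\varphi^{-}}^{\mathrm{log}}(X)$, both $(c1)$ and $(c2)$ apply and give $u_{\varphi}\in L^{\infty}(X)\Rightarrow C_{\varphi}(L^{p(\cdot)}(X))\subseteq L^{p(\cdot)}(X)\Rightarrow\mathcal{U}(\varphi)<+\infty$. To close the cycle I would prove $\mathcal{U}(\varphi)<+\infty\Rightarrow u_{\varphi}\in L^{\infty}(X)$: as $\mu$ is doubling, the Lebesgue differentiation theorem applies to the Radon--Nikodym derivative in \eqref{I1*}, so for a.e. $x$ one has $u_{\varphi}(x)=\lim_{r\to 0}\mu(\varphi^{-1}(B(x,r)))/\mu(B(x,r))\le\mathcal{U}(\varphi)$. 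Thus the three statements ``$u_{\varphi}\in L^{\infty}(X)$'', ``$C_{\varphi}$ maps $L^{p(\cdot)}(X)$ into itself'' and ``$\mathcal{U}(\varphi)<+\infty$'' are equivalent, which is exactly the equivalence of $(c1)$ and $(c2)$.
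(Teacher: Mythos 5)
Your proposal follows essentially the same route as the paper: the same decomposition of $f$ at height $1$, with Lemma~\ref{l1}(ii) handling $\{|f|\le 1\}$ and a countable ball cover combined with Lemma~\ref{l1}(i) and the condition $[\varphi]_{p^{+}}\ge 1$ handling $\{|f|>1\}$; normalized indicators of small balls read through the modular for $(c2)$; and differentiation of the Radon--Nikodym derivative $u_{\varphi}$ to close the equivalence in $(c3)$. The only cosmetic difference is that the paper calibrates the covering radius to $K^{-1/Q}$ with $K=\esssup(f_{1}\circ\varphi)$ (after first reducing to $f\in L^{\infty}_{c}$ by density) instead of a fixed-radius bounded-overlap cover, but the mechanism is identical.
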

\begin{proof} $(c1).$ Suppose that $u_{\varphi}\in L^{\infty}(X).$ Since $p^{+}<+\infty$, by density it is sufficient to show that  $$C_{\varphi}:L^{\infty}_{c}(X)\cap L^{p(\cdot)}(X)\rightarrow L^{p(\cdot)}(X).$$ Indeed, let $f\in L_{c}^{\infty}(X)\cap L^{p(\cdot)}(X)$ with $\|f\|_{p(\cdot)}\leq 1$ and consider the decomposition $|f|=f_1+f_2$ where $f_1 :=|f|\chi_{\{|f|>1\}}$ and $f_2:= |f|\chi_{\{|f|\leq 1\}}.$ We will divide the test into two steps: 

\vspace{0.2cm}
 \textbf{Step 1.} Let us see that there exists a constant $C>0$ independent of $f_{1}$ such that 
\begin{equation}\label{10}
\displaystyle\int_{X} |(f_{1}\circ\varphi)(x)|^{p(x)} \ dx \leq C.
\end{equation}
Since $f_{1}\in L^{\infty}(\mathbb{R})$ and $X$ is separable it follows that the open cover of $X$$$\left\{B(x,5r_{Q}):x\in X\right\},$$   with  $K:=\esssup_{x\in X}(f_{1}\circ\varphi)(x)>1$ and $r_{Q}:=K^{-\frac{1}{Q}},$ admits a pairwise disjoint countable open sub-cover $\{B^{j}_{5r_{Q}}: j\in\mathbb{N}\}$ such that $$X\subset \bigcup_{j\in\mathbb{N}} B^{j}_{r_{Q}}.$$ From the $\mu$-regularity, $\mu(B^{j}_{r_{Q}})\leq C_{Q} \ K^{-1}$ for all $j\in\mathbb{N}.$ In addition, since $
p(\cdot)\in LH_{0}(X)$ by using Lemma \ref{l1} there exists $M>0$ such that for $B\in \mathcal{B}_{0}$ and $x\in\varphi^{-1}(B)$ we have $\mu(B)^{p_{\varphi}(x)-p^{+}(B)}\leq M$ and the fact that $K\in(1,+\infty)$ taking $r(\cdot):=p(\cdot)-p_{\varphi}(\cdot)$ for $x\in X$ we get 
\begin{eqnarray*}
   |(f_{1}\circ\varphi)(x)|^{p(x)} &=& |(f_{1}\circ\varphi)(x)|^{r(x)} \ |(f_{1}\circ\varphi)(x)|^{p_{\varphi}(x)}\\
   &\leq&  \left(|(f_{1}\circ\varphi)(x)|^{r(x)} \ \mathcal{X}_{\{r(x)\geq0\}}+1\right) \ |(f_{1}\circ\varphi)(x)|^{p_{\varphi}(x)}\\
   &\leq& \left(K^{r(x)}\mathcal{X}_{\{r(x)\geq0\}}+1 \right) \ |(f_{1}\circ\varphi)(x)|^{p_{\varphi}(x)} \\
   &=&\left(\mu(B^{j}_{r_{Q}})^{-r(x)}\mathcal{X}_{\{r(x)\geq0\}}+1\right) \ |f_{1}\circ\varphi)(x)|^{p_{\varphi}(x)}.
\end{eqnarray*}
So, $$|(f_{1}\circ\varphi)(x)|^{p(x)}\leq \left(\mu(B^{j}_{r_{Q}})^{-r(x)}\mathcal{X}_{\{r(x)\geq0\}}+1 \right) \ |(f_{1}\circ\varphi)(x)|^{p_{\varphi}(x)}, \ \textrm{for} \ \  x\in X.$$
From $[\varphi]_{p^{+}}\leq 1,$ $$p^{+}_{\varphi}(B^{j}_{r_{Q}})\leq p^{+}(B^{j}_{r_{Q}}), \ \ \textrm{for all} \ \ j\in\mathbb{N}.$$
Hence, 
\begin{eqnarray*}
    \displaystyle\int_{X} |(f_{1}\circ\varphi)(x)|^{p(x)} \ dx &=&\sum_{j}\int_{\varphi^{-1}(B^{j}_{r_Q})} |(f_{1}\circ\varphi)(x)|^{p(x)} \ dx\\
    &\leq&\sum_{j} \int_{\varphi^{-1}(B^{j}_{r_Q})}(K^{p(x)-p_{\varphi}(x)}+1) |(f_{1}\circ\varphi)(x)|^{p_{\varphi}(x)} \ dx \\
    &\leq& \sum_{j} \int_{\varphi^{-1}(B^{j}_{r_Q})}(\mu(B^{j}_{r_Q})^{p_{\varphi}(x)-p^{+}_{B^{j}}}+1) |(f_{1}\circ\varphi)(x)|^{p_{\varphi}(x)} \ dx  \\
    &\leq& (M+1) \sum_{j} \int_{\varphi^{-1}(B^{j}_{r_Q})} |(f_{1}\circ\varphi)(x)|^{p_{\varphi}(x)} \ dx\\
    &\leq&(M+1)\|u_{\varphi}\|_{\infty}.  
\end{eqnarray*}
Therefore, the inequality (\ref{10}) is obtained with $C_{1}:=(M+1)\mathcal{U}(\varphi).$ Now, we estimate the size of $C_{\varphi}f_{2}.$

\textbf{Step 2.} There exists a constant $C_2>0$ independent of $f_{2}$ such that 
\begin{equation}\label{11}
    \displaystyle\int_{X} |(f_{2}\circ\varphi)(x)|^{p(x)} \ dx \leq C_{2}+\int_{X} h_{x_0}(x)^{p_{-}} \ dx.
\end{equation}
Since $p(\cdot)\in LH_{\infty}(X),$ we get $f_{2}\in L^{p_{\infty}}(X)$ and $u_{\varphi}\in L^{\infty}(X)$ implies also that $C_{\varphi}f_{2}\in L^{p_{\infty}}(X).$ Hence, for some $C>0$ independent of $f_{2}$ we obtain
\begin{eqnarray*}
\displaystyle\int_{X} |(f_{2}\circ\varphi)(x)|^{p(x)} \ dx &\leq & C \ \int_{X}   |(f_{2}\circ\varphi)(x)|^{p_{\infty}} \ dx + \int_{X} h_{x_0}(x)^{p_{-}} \ dx \\
&\leq & C \ \|u_{\varphi}\|_{\infty}\int_{X}   |f_{2}(x)|^{p_{\infty}} \ dx + \ \int_{X} h_{x_0}(x)^{p_{-}} \ dx \\ 
&\leq& C_{1}+\int_{X} h_{x_0}(x)^{p_{-}} \ dx, 
\end{eqnarray*}
 this last integral is finite by Lemma \ref{l1}.  Therefore, from (\ref{10}) and (\ref{11}) we obtain $$\|f\circ\varphi\|_{p(\cdot)}\leq C_{\varphi,p(\cdot)} \ \|f\|_{p(\cdot)}, \ \ \textrm{for all} \ \ f\in L_{c}^{\infty}(X)\cap L^{p(\cdot)}(X).$$
$(c2).$ Suppose that $$C_{\varphi}:L^{p(\cdot)}(X)\rightarrow L^{p(\cdot)}(X).$$ By the inequality (\ref{eq2}) and the relation given by (\ref{I1*}) it is sufficient to consider the case when $B\in\mathcal{B}_{0}$ is such that $$\|\mathcal{X}_{\varphi^{-1}(B)}\|_{p(\cdot)},\|\mathcal{X}_{B}\|_{p(\cdot)}\leq 1.$$ In this case, since $p(\cdot)\in LH_{0}(X)$ and $[\varphi]_{p^{-}}\leq 1$ applying Lemma \ref{l1} we can find a constant $C
>0$ such that,
$$\mu(\varphi^{-1}(B))\leq \|\mathcal{X}_{\varphi^{-1}(B)}\|_{p(\cdot)}^{p_{\varphi}^{-}}\leq C \  \|\mathcal{X}_{B}\|_{p(\cdot)}^{p_{\varphi}^{-}}\leq C \ \mu(B)^{\frac{p_{\varphi}^{-}}{p^{+}}}$$
$$\leq C \ \mu(B)^{\frac{1}{p^{+}_{B}}(p_{B}^{-}-p^{+}_{B})} \mu(B)= C \ \mu(B)^{\frac{1}{p^{+}}(p_{B}^{-}-p(x))} \ \mu(B)^{\frac{1}{p^{+}}(p(x)-p_{B}^{+})} \ \mu(B)$$ $$\leq C' \ \mu(B).$$ 
Finally, in order to obtain $(c3)$ by assuming doubling property of $\mu$, note that $$\displaystyle\frac{\mu(\varphi^{-1}(B))}{\mu(B)}=\frac{1}{\mu(B)}\int_{B}u_{\varphi}(x) \ d\mu(x), \ \ \textrm{for every ball} \ \ B.$$
    Hence, by differentiation it is easy see that $\mathcal{U}(\varphi)$ is finite if and only if $u_{\varphi}$ is in $L^{\infty}(X).$
\end{proof}

\begin{remark}\label{r1*} According to the proof of the above theorem: 

\begin{itemize}
    \item Another hypothesis to obtain $c1$ is $p_{\varphi}(\cdot)\geq p(\cdot)$ a.e. in $X$ and $p(\cdot)\in LH_{\infty}(X)$. Note that in our result we replace $p_{\varphi}(\cdot)\geq p(\cdot)$ by the hypothesis $p(\cdot)\in LH_{0}(X)$ and $[\varphi]_{p^{+}}\leq 1$ which also replaces the embedding $L^{p_{\varphi}(\cdot)}(X)\hookrightarrow L^{p(\cdot)}(X)$ proposed in \cite[Theorem~3.4]{bajaj2024composition}. 
\end{itemize}

\begin{itemize}
    \item In the proof of the above theorem, note that the condition (\ref{I3*}) applies to a uniform Vitali cover of the space. In this sense, control over the inductor map $\varphi$ and the exponent $p(\cdot)$ can be relaxed as shown in Example \ref{ex1}. In fact, note that for a suitable $\varphi$ (e.g. $\varphi(x):=Ax$ , $x\in\mathbb{R}^{+}$) it suffices to note that $$\displaystyle\int_{\mathbb{R}^{+}} (f_1\circ\varphi)(x)^{p(x)} \ dx \approx \int_{\varphi^{-1}({I_{q_{n_J}}})} (f_1\circ\varphi)(x)^{p(x)} \ dx,  \ \ \textrm{as} \ \ j\to +\infty. $$
    
\end{itemize}
\end{remark}    

\subsection{Compactness 
 for $C_{\varphi}$}\label{compactness}
We start this section by showing that $L^{p(\cdot)}(X)$ does not support non-trivial compact composition operators $C_{\varphi}$. Besides, we approach recent results relative to weak compactness in variable Lebesgue space and we provide some properties for $C_{\varphi}.$ 

\begin{lemma}\label{l2} 
Let $\mu$ be a $Q$-Ahlfors regular measure on $X$, $\varphi: X \rightarrow X$ be a non-singular Borel map, and $p(\cdot) \in LH_{0}(X)$ such that $[\varphi]_{p^{-}} = 1$. There exists a positive constant $C_{0}$ such that, given $A \in \mathcal{B}_{0}$ with $\mu(A) > 0$, if for any ball $B$ with $A \cap B \neq \emptyset$ and $\mu(B)<1$ then
$$\mu(B)^{1 - \frac{p(x)}{p_{\varphi}(x)}} \geq C_{0}, \quad \text{for all} \quad x \in \varphi^{-1}(A \cap B).$$
\end{lemma}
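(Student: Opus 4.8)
The plan is to follow the scheme of Lemma~\ref{l1}$(i)$. Fix $A\in\mathcal{B}_0$ with $\mu(A)>0$, a ball $B$ with $A\cap B\neq\emptyset$ and $\mu(B)<1$, and a point $x\in\varphi^{-1}(A\cap B)$; the goal is a lower bound for $\mu(B)^{1-p(x)/p_\varphi(x)}$ by a constant not depending on $A$, $B$ or $x$. If $p(x)\geq p_\varphi(x)$, the exponent $1-p(x)/p_\varphi(x)$ is nonpositive, so $0<\mu(B)<1$ already gives $\mu(B)^{1-p(x)/p_\varphi(x)}\geq 1$. Hence one may assume $p(x)<p_\varphi(x)$ and set $t:=1-p(x)/p_\varphi(x)\in(0,1)$.

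First I would replace the pointwise exponent $t$ by the oscillation of $p(\cdot)$ on $B$. Since $p(\cdot)\geq 1$,
\[
0<t=\frac{p_\varphi(x)-p(x)}{p_\varphi(x)}\leq p_\varphi(x)-p(x).
\]
This is where the hypotheses on $\varphi$ and $p(\cdot)$ enter: from $\varphi(x)\in A\cap B\subset B$ one gets $p_\varphi(x)=p(\varphi(x))\leq p^{+}_{B}$, while from $x\in\varphi^{-1}(B)$ and $[\varphi]_{p^{-}}=1$ (so that $p^{-}_{B}\leq p^{-}_{\varphi^{-1}(B)}$) one gets $p(x)\geq p^{-}_{\varphi^{-1}(B)}\geq p^{-}_{B}$. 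Hence $0<t\leq p^{+}_{B}-p^{-}_{B}$, and since $\log\mu(B)<0$,
\[
\mu(B)^{t}=\exp\!\big(t\log\mu(B)\big)\geq\exp\!\big(-(p^{+}_{B}-p^{-}_{B})\,(-\log\mu(B))\big).
\]
It thus suffices to find a constant $M\geq 0$, depending only on $Q$, the $LH_0$-constant of $p(\cdot)$ and the Ahlfors constants, with $(p^{+}_{B}-p^{-}_{B})(-\log\mu(B))\leq M$ for every ball $B$ with $\mu(B)<1$; then $C_0:=e^{-M}$ will work.

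To get this, split according to $\delta:=\diam B$. If $\delta<\tfrac12$, every pair of points of $B$ lies within distance $<\tfrac12$, so $p(\cdot)\in LH_0(X)$ gives $p^{+}_{B}-p^{-}_{B}\leq K_0/(-\log\delta)$ after the usual passage from the pointwise inequality to the essential extrema (take points where $p(\cdot)$ nearly attains $p^{+}_{B}$ and $p^{-}_{B}$ and let them tend to these values); meanwhile $Q$-Ahlfors regularity gives $\mu(B)\gtrsim\delta^{Q}$, hence $-\log\mu(B)\lesssim -\log\delta$, the additive constant term being harmless since $-\log\delta>\log 2$ on this range. Multiplying bounds $(p^{+}_{B}-p^{-}_{B})(-\log\mu(B))$. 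If instead $\delta\geq\tfrac12$, then $Q$-Ahlfors regularity forces $\mu(B)\gtrsim 2^{-Q}$, so together with $\mu(B)<1$ the value $\mu(B)$ lies in a fixed interval bounded away from $0$; since $t\in(0,1)$ we have $\mu(B)^{t}\geq\mu(B)$, which is bounded below by a positive constant. (The upper Ahlfors bound also caps $\delta$, so this range of balls is harmless.) Taking $C_0$ to be the minimum of the two resulting constants and of $1$ completes the proof.

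The delicate point is the uniform estimate $(p^{+}_{B}-p^{-}_{B})(-\log\mu(B))\leq M$: the decay of the $LH_0$-modulus over a ball of diameter $\delta$ must cancel the logarithmic blow-up of $-\log\mu(B)$ as $\delta\to 0$, and it is precisely $Q$-Ahlfors regularity that ties both quantities to $\Theta(\log(1/\delta))$ so that they offset. The only other technical care is the standard reduction from the pointwise $LH_0$ inequality to a bound on the \emph{essential} oscillation $p^{+}_{B}-p^{-}_{B}$, as in Lemma~\ref{l1}$(i)$.
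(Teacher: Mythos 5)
Your proof is correct and follows essentially the same route as the paper: both reduce the pointwise exponent to the oscillation $p^{+}_{B}-p^{-}_{B}$ via the two inequalities $p_{\varphi}(x)\leq p^{+}_{B}$ (from $\varphi(x)\in A\cap B\subset B$) and $p(x)\geq p^{-}_{\varphi^{-1}(B)}\geq p^{-}_{B}$ (from $[\varphi]_{p^{-}}=1$), and then conclude with the standard log-H\"older/Ahlfors key estimate $\mu(B)^{-(p^{+}_{B}-p^{-}_{B})}\leq C$. The only difference is that you prove that key estimate directly (splitting on $\diam B$), whereas the paper cites it from Harjulehto--H\"ast\"o--Pere.
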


\begin{proof}
     On one hand, $$\sup_{x\in \varphi^{-1}(A\cap B)}p_{\varphi}(x)\leq \sup_{x\in A\cap B}p(x)\leq \sup_{x\in B} p(x)=p^{+}_{B}.$$
     On the other hand, since $[\varphi]_{p^{-}}=1$
     $$\inf_{x\in\varphi^{-1}(A\cap B)}p(x)\geq \inf_{x\in\varphi^{-1}(B)}p(x)\geq \inf_{x\in B}p(x)=p^{-}_{B}.$$
     Therefore, for $x\in \varphi^{-1}(A\cap B)$ it follows that $\mu(B)^{\frac{p(x)}{p_{\varphi}(x)}}\leq \mu(B)^{\frac{p^{-}(B)}{p^{+}(B)}}.$ Hence, since $p(\cdot)\in LH_{0}(X)$ by \cite[lemma~3.6]{harjulehto2004variable} there exists $C_{0}>0$ such that $$\mu(B)^{1-\frac{p(x)}{p_{\varphi}(x)}}\geq \mu(B)^{1-\frac{p^{-}_{B}}{p^{+}_{B}}}= \mu(B)^{\frac{p^{+}_{B}-p^{-}_{B}}{p^{+}_{B}}}\geq C_{0}.$$
    \end{proof}

\begin{theorem}\label{t3} 
Let $\mu$ be a Ahlfors $Q$-regular and doubling measure on $X$. 
The non-trivial bounded composition operator $C_{\varphi}$ is not compact on $L^{p(\cdot)}(X).$
\end{theorem}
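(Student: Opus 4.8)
The plan is to argue by contradiction, exploiting the non-atomicity of a $Q$-Ahlfors regular measure together with the characterization of precompact sets in $L^{p(\cdot)}(X)$ from \cite{gorka2015almost}. First I would observe that since $\mu$ is $Q$-Ahlfors regular, $\mu$ has no atoms: if $\mu(\{x_0\})>0$ then $\mu(B(x_0,r))\geq\mu(\{x_0\})>0$ for all $r>0$, while upper regularity forces $\mu(B(x_0,r))\leq C\,r^Q\to 0$, a contradiction. Next, since $C_\varphi$ is non-trivial, the multiplication behaviour of $u_\varphi$ is non-degenerate on a set of positive measure; using non-atomicity I would select a decreasing (or pairwise disjoint) sequence of balls $B_n$ with $0<\mu(\varphi^{-1}(B_n))$ and $\mu(B_n)\to 0$, exactly as in Lemma \ref{l2}, so that the normalized characteristic functions $f_n:=\mathcal{X}_{B_n}/\|\mathcal{X}_{B_n}\|_{p(\cdot)}$ form a bounded sequence in $L^{p(\cdot)}(X)$ that converges weakly to $0$ (its modular mass escapes to "fine scales," and against any fixed test function in $L^{p'(\cdot)}(X)$ the pairings vanish because $\mu(B_n)\to 0$).

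The heart of the argument is then to show $\{C_\varphi f_n\}$ is \emph{not} precompact in $L^{p(\cdot)}(X)$: if $C_\varphi$ were compact, a subsequence of $C_\varphi f_n=\mathcal{X}_{\varphi^{-1}(B_n)}/\|\mathcal{X}_{B_n}\|_{p(\cdot)}$ would converge in norm, necessarily to $0$ since it already converges weakly to $0$. So it suffices to bound $\|C_\varphi f_n\|_{p(\cdot)}$ away from $0$. Here I would invoke Lemma \ref{l2}: for the relevant balls, $\mu(B_n)^{1-p(x)/p_\varphi(x)}\geq C_0$ on $\varphi^{-1}(B_n)$, which upon rearranging gives the lower bound
\[
\rho\!\left(\frac{\mathcal{X}_{\varphi^{-1}(B_n)}}{\|\mathcal{X}_{B_n}\|_{p(\cdot)}}\right)
=\int_{\varphi^{-1}(B_n)}\|\mathcal{X}_{B_n}\|_{p(\cdot)}^{-p(x)}\,d\mu
\gtrsim \mu(B_n)^{-1}\,\mu(\varphi^{-1}(B_n)),
\]
using $\|\mathcal{X}_{B_n}\|_{p(\cdot)}\approx \mu(B_n)^{1/p^+_{B_n}}$ and the exponent comparisons from $[\varphi]_{p^-}=1$. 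Since the ratio $\mu(\varphi^{-1}(B_n))/\mu(B_n)$ can be kept bounded below (the doubling/differentiation setup lets us choose the $B_n$ where the Radon–Nikodym density $u_\varphi$ is essentially positive, which holds on a set of positive measure precisely because $C_\varphi$ is non-trivial), the modular stays bounded below, hence so does the norm by \eqref{eq2}. This contradicts norm-convergence to $0$.

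The main obstacle is the bookkeeping around the \emph{normalization}: one must choose the balls $B_n$ so that simultaneously (a) $\mu(B_n)\to 0$ (to force weak convergence to $0$ and to legitimately apply Lemma \ref{l2}), (b) $\mu(\varphi^{-1}(B_n))/\mu(B_n)$ stays bounded below away from $0$, and (c) the exponent oscillation on $B_n$ and on $\varphi^{-1}(B_n)$ is controlled via $p(\cdot)\in LH_0(X)$ and $[\varphi]_{p^-}=1$, so that the modular–norm passage \eqref{eq2} does not destroy the lower bound. Point (b) is where non-triviality of $C_\varphi$ must be used carefully: I would argue that if $u_\varphi=0$ a.e.\ on every small ball then $\mu(\varphi^{-1}(B))=0$ for all balls $B$, forcing $C_\varphi$ to kill a norming family and hence be trivial; otherwise the Lebesgue differentiation theorem for doubling measures produces density points of $\{u_\varphi>0\}$ around which balls with $\mu(\varphi^{-1}(B_n))\gtrsim\mu(B_n)$ exist. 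Once these three conditions are met, combining them with Lemma \ref{l2} closes the contradiction and proves that no non-trivial bounded $C_\varphi$ can be compact.
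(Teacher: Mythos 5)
Your overall skeleton matches the paper's: non-atomicity from upper Ahlfors regularity, shrinking balls centred at (density) points of a set where $u_{\varphi}$ is bounded below, normalized indicator functions, and Lemma~\ref{l2} to get a modular lower bound of order $\mu(\varphi^{-1}(B_n))/\mu(B_n)$. The divergence is in how you convert this into a contradiction with compactness, and that is where there is a genuine gap. You argue: $f_n:=\mathcal{X}_{B_n}/\|\mathcal{X}_{B_n}\|_{p(\cdot)}\rightharpoonup 0$, hence $C_{\varphi}f_n\rightharpoonup 0$, hence any norm-convergent subsequence has limit $0$, contradicting the lower bound. But the weak convergence $f_n\rightharpoonup 0$ is not justified by "$\mu(B_n)\to 0$" alone, and it is in fact false when the exponent equals $1$ near the chosen point $x_0$: there $\|\mathcal{X}_{B_n}\|_{p(\cdot)}\approx\mu(B_n)$, and testing against the bounded function $g\equiv 1$ (which lies in the dual since $p^{+}<\infty$) gives $\langle f_n,g\rangle\approx 1$ for all $n$ --- this is the classical $L^{1}$ approximate-identity phenomenon, where normalized indicators of shrinking balls converge to a point mass rather than to $0$. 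The estimate $|\langle f_n,g\rangle|\lesssim\|g\,\mathcal{X}_{B_n}\|_{p'(\cdot)}\to 0$ needs absolute continuity of the $p'(\cdot)$-norm on $B_n$, i.e.\ $p'$ bounded there, i.e.\ $p>1$ near $x_0$; nothing in the hypotheses lets you choose the density point of $\{u_{\varphi}>\epsilon\}$ away from $p^{-1}(\{1\})$.

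The gap is repairable in two ways, and the first is what the paper actually does: instead of weak convergence, invoke \cite[Theorem~1]{gorka2015almost}, by which a compact $C_{\varphi}$ maps the unit ball to a totally bounded, hence $p(\cdot)$-equi-integrable, set; your modular lower bound $\int_{A_{\delta}}(C_{\varphi}f_N)^{p(x)}\,d\mu\gtrsim\epsilon\,C_{0}$ on sets $A_{\delta}=\varphi^{-1}(U_N\cap B(x_N,\delta))$ of arbitrarily small measure directly violates equi-integrability, with no weak topology involved. Alternatively, you could keep your scheme but replace the weak-nullity step by a support argument: $\mu(\varphi^{-1}(B_n))=\int_{B_n}u_{\varphi}\,d\mu\to 0$, so along any norm-convergent subsequence of $C_{\varphi}f_n$ a further a.e.-convergent subsequence forces the limit to vanish a.e.; this identifies the limit as $0$ without any weak convergence. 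The remaining ingredients of your proposal (the use of $[\varphi]_{p^{-}}=1$ and $LH_{0}$ to control $\mu(B_n)^{1-p(x)/p_{\varphi}(x)}$, the differentiation argument giving $\mu(\varphi^{-1}(B_n))\gtrsim\mu(B_n)$, and the final step that $u_{\varphi}=0$ a.e.\ forces $C_{\varphi}=0$) agree with the paper and are sound.
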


\begin{proof}
    Note initially that the space $(X,d,\mu)$ does not contain atoms. Indeed, if $E\in \mathcal{B}_{X}$ is an atom then $\mu(E)>0$ so there exists $a\in E$ such that $\mu(E\cap B(a,r))>0$ for all $r>0.$ Hence, if $$\mu(E\cap B(a,r))<\mu(E), \ \  \textrm{for some } \ \ r>0,$$ then the atomicity if $E$ implies that $\mu(E\cap B(a,r))=0,$ it is a contradiction. On the other hand, if $\mu(E\cap B(a,r))=\mu(E)$ for all $r>0$ then by using the $Q$-Ahlfors property of $\mu$ we obtain $0<\mu(E)\leq C (2r)^{Q}$ thus  $\mu(E)=0$ as $r\rightarrow0^{+}$ which is also contradiction.  Besides, suppose that $C_{\varphi}$ is compact on all $L^{p(\cdot)}(X)$ and we consider for $\epsilon\in(0,+\infty)$ the set $$U_{\epsilon}:=\{x\in X: u_{\varphi}(x)>\epsilon\}.$$
    If $\mu(U_{\epsilon})>0$ for some $\epsilon>0$ then from non-atomicity of $\mu$ it follows that there exists a decreasing sequence $\{U_{n}\}$ such that 
      $$\{U_{n}\}\subset U_{\epsilon} \ \ \textrm{with} \ \  0<\mu(U_n)<1/n \ \  \textrm{for any,} \ \ n\in\mathbb{N}.$$ 
      Let's construct a bounded sequence in $ L   ^{p(\cdot)}-$norm that is not equi-integrable in $L^{p(\cdot)};$ given $\delta>0,$ since $\varphi:X\rightarrow X$ is non-singular,  choose $\rho>0$ such that $$\mu(\varphi^{-1}(S))< \delta, \ \ \textrm{whenever} \ \ \mu(S)< \rho.$$
      For this $\rho>0$ there exist $N>0$ which $1/N\leq \rho.$ Now, since $\mu(U_{n})>0$ for all $n\in\mathbb{N}$ by \cite[Lemma~3.3.31]{heinonen2015sobolev} we can fix $x_{n}\in U_{n}$ satisfying $\mu(U_{n}\cap B(x_{n},\delta))>0.$ Consider the set $A_{\delta}\in \mathcal{B}_{0},$ given by $$A_{\delta}:=\varphi^{-1}(U_{N}\cap B(x_{N},\delta)),$$
      it is clear that $\mu(A_{\delta})< \delta.$ Moreover by using Lemma \ref{l2} the function  $f_{N}:X\rightarrow\mathbb{R}$ given by  $$f_{N}:=\mu(B(x_{N},\delta))^{-\frac{1}{p(\cdot)}}\mathcal{X}_{U_{N}\cap B(x_{N},\delta)}(\cdot)\in B_{L^{p(\cdot)}},$$
    is such that
    $$ \displaystyle\int_{A_{\delta}}(C_{\varphi}f_{N})(x)^{p(x)} \ d\mu(x)\geq  \ \int_{\varphi^{-1}(U_{N}\cap B(x_{N},\delta))} \left(\frac{1}{\mu(B(x_{N},\delta))}\right)^{\frac{p(x)}{p_{\varphi}(x)}}\ d\mu(x)$$$$\geq  \mu(B(x_N,\delta))^{\frac{p_{B}^{+}-p_{B}^{-}}{p_{B}^{+}}} \int_{U_{N}\cap B(x_N,\delta)} \ \frac{u_{\varphi}(x)}{\mu(B(x_N,\delta))} \  d\mu(x) $$ $$ \geq\epsilon \ C_{0} \  \frac{\mu(U_{N}\cap B(x_N,\delta))}{\mu(B(x_N,\delta))}. $$
      By differentiation, $$\displaystyle\lim_{r\rightarrow 0^{+}} \frac{\mu(U_{N}\cap B(x_{N},r))}{\mu(B(x_{N},r))}=1.$$
    Therefore, taking $\delta\in (0,+\infty)$ enough small we can suppose that we have the follows lower bound $$\frac{\mu(U_{N}\cap B(x_N,\delta))}{\mu(B(x_N,\delta))}\geq \frac{1}{2}.$$ Thus, $$\displaystyle\int_{A_{\delta}}(C_{\varphi}f_{N})(x)^{p(x)} \ d\mu(x)\geq  \ \epsilon \ C_{0}/2, \ \ \textrm{for} \ \ \delta \ \ \textrm{enough \ small}.$$
    y switching to a subsequence if necessary, we have that the sequence $\{C_{\varphi} f_n\}$ is not equi-integrable in $L^{p(\cdot)}(X)$. Thus, by virtue of \cite[Theorem~1]{gorka2015almost}, this contradicts the compactness of $C_{\varphi}$. Consequently, $\mu(U_{\epsilon})=0$ for all $\epsilon\in(0,+\infty)$ or equivalently $u_{\varphi}(x)=0$ a.e. in $x\in X.$ This implies that, $$\displaystyle\int_{\varphi^{-1}(B)} |C_{\varphi}f|^{p(x)} \ d\mu(x)\leq \sum_{j=1,2}\int_{\varphi^{-1}(B)} |C_{\varphi}f_{j}|^{p^{\pm}} \ d\mu(x)$$$$\leq \sum_{j=1,2}\int_{B}|f_{j}(x)|^{p^{\pm}} \ u_{\varphi}(x) \ d\mu(x)=0,$$
      that is, $C_{\varphi}f=0$ on $\varphi^{-1}(B)$ for all $B\in\mathcal{B}_{0}$ and it is enough for to get $C_{\varphi}=0.$ 
\end{proof}

\section{Some properties for \texorpdfstring{$T_{\varphi}$}{lg}}\label{properties}
\subsection{\texorpdfstring{$L^{p(\cdot)}$}{lg}-boundedness of \texorpdfstring{$T_{\varphi}$}{lg}} 

In this second part, we provide a complete characterization of the continuity for composition operators in the framework of variable Lebesgue spaces.

\begin{theorem}\label{t2}
    Let $\mu$ with doubling property on $X$,      $p(\cdot)\in \mathcal{P}(X)$, and $\varphi:X\rightarrow X$ be a non-singular Borel  measurable map.
    Then, the composition operator $T_{\varphi}$ maps space $L^{p(\cdot)}(X)$ into $L^{p_{\varphi}(\cdot)}(X)$ if and only if the function $x\mapsto u_{\varphi}(x)^{1/p(x)}$ is essentially bounded, that is,  $u_{\varphi}(\cdot)^{1/p(\cdot)}\in L^{\infty}(X).$
    Moreover, $$\left\|T_{\varphi}\right\|=\esssup_{x\in X}\left\{u_{\varphi}(x)^{\frac{1}{p(x)}}\right\}.$$
\end{theorem}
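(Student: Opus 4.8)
The plan is to pass through the modular, exploiting the change-of-variables identity \eqref{I1*} written in the form $\int_X (g\circ\varphi)(x)\,d\mu = \int_X g(y)\,u_\varphi(y)\,d\mu$ for nonnegative measurable $g$, which follows from \eqref{I1*} by the standard approximation of $g$ by simple functions. Applying this with $g(y) = |f(y)|^{p(y)}$ gives the key identity
\[
\rho_{p_\varphi}(f\circ\varphi) \;=\; \int_X |f(\varphi(x))|^{p(\varphi(x))}\,d\mu \;=\; \int_X |f(y)|^{p(y)}\,u_\varphi(y)\,d\mu,
\]
so that controlling $T_\varphi$ amounts to comparing the weighted modular $\int_X |f|^{p(\cdot)} u_\varphi\,d\mu$ with the ordinary modular $\rho_p(f) = \int_X |f|^{p(\cdot)}\,d\mu$.

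For the sufficiency direction, suppose $M := \esssup_{x\in X} u_\varphi(x)^{1/p(x)} < +\infty$, equivalently $u_\varphi(x) \le M^{p(x)}$ a.e. Then for any $\eta > 0$,
\[
\rho_{p_\varphi}\!\left(\frac{f\circ\varphi}{\eta}\right) = \int_X \left|\frac{f(y)}{\eta}\right|^{p(y)} u_\varphi(y)\,d\mu \le \int_X \left|\frac{M f(y)}{\eta}\right|^{p(y)}\,d\mu = \rho_p\!\left(\frac{Mf}{\eta}\right),
\]
and choosing $\eta = M\|f\|_{p(\cdot)}$ (for $f\neq 0$) makes the right-hand side $\le 1$ by definition of the Luxemburg norm; hence $\|f\circ\varphi\|_{p_\varphi(\cdot)} \le M\|f\|_{p(\cdot)}$. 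This shows $T_\varphi$ is bounded with $\|T_\varphi\| \le M$.

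For necessity and the norm lower bound, I would argue by contradiction/localization: if $u_\varphi(\cdot)^{1/p(\cdot)}$ is not essentially bounded by some $\lambda$, the set $U_\lambda = \{x : u_\varphi(x) > \lambda^{p(x)}\}$ has positive measure, and since $\mu$ is doubling (hence, as in the proof of Theorem \ref{t3}, nonatomic) one can find a ball $B$ with $0 < \mu(U_\lambda \cap B) < \infty$; testing with normalized multiples of $\chi_{U_\lambda\cap B}$ and using \eqref{eq2} to convert modular estimates into norm estimates forces $\|T_\varphi\| \ge \lambda$, giving both the finiteness necessity and $\|T_\varphi\| \ge \esssup_x u_\varphi(x)^{1/p(x)}$. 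Combined with the upper bound from Step 1, the norm formula follows.

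The main obstacle is the necessity/lower-bound step when $p$ is unbounded: the inequality \eqref{eq2} relating norm and modular requires $p^+ < \infty$, so on unbounded exponents one cannot freely translate a modular lower bound on the test function into a norm bound. I expect to handle this by working locally on a ball $B$ where $p$ is bounded (each ball has finite measure and, on a bounded set, one can further restrict to a sublevel set $\{p \le k\}$ of positive measure to recover a finite local $p^+$), and by choosing the test function's height to make the relevant modular exactly $1$ so that the Luxemburg norm is pinned down directly from its definition rather than through \eqref{eq2}. Verifying that the doubling (hence nonatomic) hypothesis genuinely supplies balls with $0 < \mu(U_\lambda \cap B)$ of arbitrarily small but positive measure — exactly the mechanism already used in Theorem \ref{t3} — is the one place where care is needed.
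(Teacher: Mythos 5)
Your proposal is correct, and the sufficiency half is essentially the paper's argument: the paper routes it through the multiplication operator $M_{u_\varphi}$ with symbol $u_\varphi(\cdot)^{1/p(\cdot)}$ and the lattice property of $L^{p(\cdot)}(X)$, which is the same modular inequality $\rho_{p_\varphi}(f\circ\varphi/\eta)\le\rho_p(Mf/\eta)$ you write down. The necessity half is where you genuinely diverge. The paper tests $T_\varphi$ against the normalized functions $f(x)=\mu(B)^{-1/p(x)}\mathcal{X}_B(x)$ over all balls $B$, obtains $\frac{1}{\mu(B)}\int_B C^{-p(x)}u_\varphi(x)\,d\mu\le 1$, and concludes by the Lebesgue differentiation theorem — this is where the doubling hypothesis is actually used. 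You instead localize on the super-level set $U_\lambda=\{x:u_\varphi(x)>\lambda^{p(x)}\}$ and test with characteristic functions of a subset $E\subset U_\lambda$ of finite positive measure. This works, and in fact more cleanly than you suggest: since $u_\varphi>\lambda^{p(\cdot)}$ on $E$, the change-of-variables identity gives $\rho_{p_\varphi}((\chi_E\circ\varphi)/\eta)\ge\rho_p(\lambda\chi_E/\eta)>1$ whenever $\eta<\lambda\|\chi_E\|_{p(\cdot)}$ (directly from the definition of the Luxemburg norm as an infimum), so $\|T_\varphi\|\ge\lambda$ with no appeal to \eqref{eq2}, no restriction to sublevel sets of $p$, and no use of nonatomicity or doubling. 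The obstacle you flag about $p^+=\infty$ is therefore a red herring once you adopt your own proposed fix of pinning the norm from its definition; your route has the incidental advantage of not needing the doubling hypothesis for this direction, whereas the paper's differentiation argument does.
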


  \begin{remark}
     Theorem \ref{t2} generalizes the well-known result in the framework of Lebesgue spaces with constant exponent. More precisely, when $p(\cdot)=p\geq1$  then both the induced exponent and the space induced by the measurable map $\varphi$ remain invariant, that is, $$p_{\varphi}(\cdot)=p \ \ \textrm{and} \ L^{p_{\varphi}(\cdot)}(X)=L^{p}(X).$$ 
  \end{remark}

\begin{remark}
    Since $f\in L^{p(\cdot)}(X)$ if and only if $f^{p(\cdot)-1}\in L^{p'(\cdot)}(X)$ (the same is true for $p'(\cdot)$), by Theorem \ref{t2}, it follows that $T_{\varphi}$ maps $L^{p(\cdot)}(X)$ into $L^{p_{\varphi}(\cdot)}(X)$ if and only if $T_{\varphi}$ maps $L^{p'(\cdot)}(X)$ into $L^{p'_{\varphi}(\cdot)}(X).$ An interesting question is whether this is true for the operator $C_{\varphi}$ acting on $L^{p'(\cdot)}$ and $L^{p(\cdot)}.$
\end{remark}

\begin{remark}\label{r1}
      In the case that  \begin{equation}\label{c2}
      L^{p_{\varphi}(\cdot)}(X)\hookrightarrow L^{p(\cdot)}(X)
      \end{equation}
      Theorem \ref{t2} provided a sufficient condition which the operator $T_{\varphi}$ maps $L^{p(\cdot)}(X)$ into itself. In fact, the embedding (\ref{c2}) provided a class of maps $\varphi:X\rightarrow X$ induced composition operators on $L^{p(\cdot)}(X).$   
  \end{remark}

\begin{remark}
      Note that from Remark (\ref{r1}) the embedding condition (\ref{c2}) can be modified by assuming that the variable exponent decays to infinity; for example assume $p_{\varphi}(x)\geq p(x)$ a.e. in $x\in X$ (e.g., see {\rm{\cite[Theorem~2.45]{cruz-uribe2013}}}) and $p(\cdot)\in LH_{\infty}(X).$  
      
  \end{remark}

  \begin{remark}
      If the function $x\mapsto u_{\varphi}(x)$ is bounded then for any $p(\cdot)\in\mathcal{P}(X)$ the function  $x\rightarrow u_{\varphi}(x)^{1/p(x)}$ is also bounded so by Theorem \ref{t2} we obtain the following weak inequality: there exists $C>0$ depending of $\varphi$ such that for each $t>0$ and $f\in L^{p(\cdot)}(X),$
    $$\left\|t \ \mathcal{X}_{\{x:(f\circ\varphi)(x)>t\}}\right\|_{p_{\varphi}(\cdot)}\leq C \  \|f\circ\varphi\|_{p_{\varphi}(\cdot)}\leq C \ \|f\|_{p(\cdot)}$$
  \end{remark}

\begin{proof} (Proof of Theorem \ref{t2}) Suppose that $T_{\varphi}$ maps $L^{p(\cdot)}(X)$ into $L^{p_{\varphi}(\cdot)}(X)$ then by \textit{Closed Graph Theorem} there exists $C>0$ such that $\|T_{\varphi}\|\leq C.$ Let us show that  $u_{\varphi}(\cdot)^{1/p(\cdot)}\in L^{\infty}(X);$ note first that $u_{\varphi}(\cdot)^{1/p(\cdot)}\in L^{1}_{loc}(X)$ so let $B\in \mathcal{B}_{0}$ and define the function 
$f:X\rightarrow \mathbb{R}$ by 
\begin{equation}\label{n1}
f(x)=\displaystyle\mu(B)^{-\frac{1}{p(x)}} \ \mathcal{X}_{B}(x), \ \ x\in\mathbb{R}.
\end{equation}
Hence, it is clear $\rho(f/\lambda)\leq 1$ for all $\lambda\geq 1$ so $f\in L^{p(\cdot)}(X)$ and $\|f\|_{p(\cdot)}=1$ this implies $f\circ\varphi\in L^{p_{\varphi}(\cdot)}(X)$ and $\|f\circ\varphi\|_{p_{\varphi}(\cdot)}\leq C,$ that is, there exists $\lambda_{0}>0$ which $\lambda_{0}<C$ and $$ \displaystyle\int_{\varphi^{-1}(B)} \lambda_{0}^{-p_{\varphi}(x)} \ \mu(B)^{-\frac{p_{\varphi}(x)}{p_{\varphi}(x)}} \ dx\leq 1.$$  
Consequently, 
\begin{eqnarray*}
    \frac{1}{\mu(B)}\displaystyle\int_{B} C^{-p(x)} \ u_{\varphi}(x) \ dx &=& \frac{1}{\mu(B)}\displaystyle\int_{\varphi^{-1}(B)} C^{-p_{\varphi}(x)}  \ dx \\ 
    &\leq&  \displaystyle\int_{\varphi^{-1}(B)} \lambda_{0}^{-p_{\varphi}(x)} \ \mu(B)^{-\frac{p_{\varphi}(x)}{p_{\varphi}(x)}} \ dx \\
    &\leq& 1,
 \end{eqnarray*}
 by differentiation, it follows that the map $x\mapsto C^{-p(x)} u_{\varphi}(x)$ is essentially bounded. Reciprocally,  denote by $M_{\varphi}$ the multiplication operator with symbol $u_{\varphi}(\cdot)^{1/p(\cdot)}$ so if $u_{\varphi}(\cdot)^{1/p(\cdot)}\in L^{\infty}(X)$ then for each $f\in L^{p(\cdot)}(X),$ $$\left|(M_{u_{\varphi}}f)(x)\right|\leq  \esssup_{y} \left\{u_{\varphi}(y)^{1/p(y)}\right\} \ |f(x)|, \ \ \textrm{a.e. in} \ x\in X.$$
 Since $L^{p(\cdot)}(X)$ is a lattice, 
 $$\|M_{\varphi}f\|_{p(\cdot)}\leq \esssup_{y} \left\{u_{\varphi}(y)^{1/p(y)}\right\} \|f\|_{p(\cdot)}.$$
 Hence, 
 \begin{eqnarray*}
     \|f\circ\varphi\|_{p_{\varphi}(\cdot)}&=&\inf\left\{\lambda>0: \displaystyle\int_{X} \lambda^{-p(\varphi(x))} \ |f(\varphi(x))|^{p(\varphi(x))} \ dx \leq 1\right\} \\
     &=&\inf\left\{\lambda>0: \displaystyle\int_{X} \lambda^{-p(x)} \ |f(x)|^{p(x)} \ u_{\varphi}(x) \ dx \leq 1\right\}\\
     &=& \inf\left\{\lambda>0: \displaystyle\int_{X} \lambda^{-p(x)} \ |(M_{u_{\varphi}}f)(x)|^{p(x)} \ dx \leq 1\right\} \\
&=&\|M_{u_{\varphi}}f\|_{p(\cdot)} \\
&\leq& \esssup_{y} \left\{u_{\varphi}(y)^{1/p(y)}\right\} \|f\|_{p(\cdot)}
 \end{eqnarray*}
 that is, $T_{\varphi}$ maps $L^{p(\cdot)}(X)$ into $L^{p_{\varphi}(\cdot)}(X)$ and $\|T\|\leq  \esssup_{y} \left\{u_{\varphi}(y)^{1/p(y)}\right\},$ computing with the normalized functions given in the equality of the norm (\ref{n1}). 
  \end{proof}

\subsection{Compactness for \texorpdfstring{$T_{\varphi}$}{lg}} In the study of the compactness of $C_{\varphi}$ over $L^{p(\cdot)}$ the presence of non-atomic sets was a consequence of the regular Ahlfors structure of the space necessary to have at least one continuous composition operator. For the case of the operator $T_{\varphi}$, to obtain continuity we only require that the space admits a doubling measure which is a weaker hypothesis than the Ahlfors regularity therefore to guarantee no atomic sets we assume in addition that the space is connected. 

\begin{lemma}
\label{l3} Every connected doubling metric measure space $(X,d,\mu)$ does not contain atoms.   \end{lemma}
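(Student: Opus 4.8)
The plan is to argue by contradiction, mirroring the non-atomicity argument already used in the proof of Theorem \ref{t3} but replacing the $Q$-Ahlfors shrinking estimate with a consequence of connectedness. Suppose $E \in \mathcal{B}_X$ is an atom; then $\mu(E) > 0$, and by the density/Lebesgue-point type statement \cite[Lemma~3.3.31]{heinonen2015sobolev} we may fix a point $a \in E$ with $\mu(E \cap B(a,r)) > 0$ for every $r > 0$. Since $E$ is an atom, for each $r$ the set $E \cap B(a,r)$ must have measure either $0$ or $\mu(E)$; as it is positive, we conclude $\mu(E \cap B(a,r)) = \mu(E)$ for all $r > 0$, and in particular $\mu(B(a,r)) \geq \mu(E) > 0$ for all $r > 0$. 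Letting $r \to 0^{+}$ shows $\mu\big(\{a\}\big) = \lim_{r\to 0^+}\mu(B(a,r)) \geq \mu(E)$, so in fact $\mu(\{a\}) = \mu(E)$ and the singleton $\{a\}$ is itself an atom of full measure in $E$. Thus it suffices to rule out a point mass, i.e. to show $\mu(\{a\}) = 0$.

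Next I would bring in connectedness together with the doubling condition. The key point is that in a connected metric space with more than one point, every ball $B(a,r)$ with $r < \operatorname{diam} X$ is a proper nonempty subset whose measure is strictly larger than that of sufficiently small sub-balls; more usefully, connectedness forbids isolated points, so for every $r>0$ there is a point $y_r \in X$ with $d(a,y_r) = r/2$ (say), and hence the annulus $B(a,r) \setminus \overline{B(a,r/4)}$ is nonempty and open, so it has positive measure by our standing assumption that nonempty open sets have positive measure. Iterating, one produces a sequence of pairwise disjoint open annuli $A_k := B(a, r\,2^{-k+1}) \setminus \overline{B(a, r\,2^{-k})}$, each of positive measure, all contained in $B(a,r)$. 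The doubling property then gives a \emph{lower} bound: $\mu(B(a, r\,2^{-k+1})) \geq c\,\mu(B(a, r\,2^{-k+1}))$ is vacuous, so instead I would use the standard consequence of doubling that $\mu(B(a,s))/\mu(B(a,t)) \geq c\,(s/t)^{\sigma}$ for $s \leq t$ and some $\sigma, c>0$ — i.e. the reverse-doubling or lower mass bound valid precisely because $X$ is connected (this is where connectedness is essential: doubling alone permits atoms, but doubling $+$ connected implies reverse doubling, see \cite{heinonen2015sobolev, bjorn2011nonlinear}). Applying this with $s = r2^{-k} \to 0$ and $t = r$ fixed yields $\mu(B(a, r2^{-k})) \leq C\,2^{-\sigma k}\,\mu(B(a,r)) \to 0$ as $k \to \infty$, whence $\mu(\{a\}) = 0$, contradicting $\mu(\{a\}) = \mu(E) > 0$.

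The main obstacle is making the reverse-doubling step rigorous: one must verify that a connected doubling metric measure space (with the blanket hypotheses that open sets have positive and bounded sets finite measure) genuinely satisfies a quantitative lower bound $\mu(B(a,s)) \gtrsim (s/t)^{\sigma}\mu(B(a,t))$. The cleanest route is to quote the standard fact (e.g. from \cite[Ch.~8]{heinonen2015sobolev} or the monograph \cite{bjorn2011nonlinear}) that connectedness of $X$ implies the existence of points at every intermediate distance from $a$, forcing each dyadic annulus around $a$ to be nonempty and hence of positive measure, and that combining this with the upper doubling inequality produces the reverse-doubling exponent $\sigma$; then the telescoping estimate over $k$ dyadic scales is routine. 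I would state this auxiliary reverse-doubling fact explicitly as a sub-claim with a one-line justification and a citation, so that the contradiction $\mu(\{a\}) = 0 \neq \mu(E)$ closes the argument.
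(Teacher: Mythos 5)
Your proposal is correct and follows essentially the same route as the paper: reduce to the case $\mu(E\cap B(a,r))=\mu(E)$ (the case $\mu(E\cap B(a,r))<\mu(E)$ being killed directly by atomicity), then invoke the reverse-doubling/power-decay estimate $\mu(B(a,r))\leq C\,(r/R)^{\sigma}\mu(B(a,R))$ valid for connected doubling spaces — the paper quotes exactly this as \cite[Lemma~3.7]{bjorn2011nonlinear} — and let $r\to 0^{+}$. Only note that your displayed inequality $\mu(B(a,s))/\mu(B(a,t))\geq c\,(s/t)^{\sigma}$ has the direction reversed (that lower bound follows from doubling alone and would not yield decay); the bound you actually apply in the telescoping step, $\mu(B(a,r2^{-k}))\leq C\,2^{-\sigma k}\mu(B(a,r))$, is the correct one.
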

\begin{proof}
    Assume that  $E\in \mathcal{B}_{0}$ is an atom then $\mu(E)>0$ this implies $\mu(E\cap B(w,r))>0$ for some $w\in E$ and all $r>0.$ Hence, in the case $\mu(E\cap B(w,r))=\mu(E)$ by using \cite[Lemma~ 3.7]{bjorn2011nonlinear} there exist constants $C,\sigma>0$ such that $$\mu(E)\leq \mu(B(w,r))\leq C \ r^{\sigma} \ R^{-\sigma} \ \mu(B(w,R)), \ R>r>0,$$
    from here that, $\mu(E)=0$ as $r\rightarrow0^{+},$ and it is a contradiction. On other case, $\mu(E\cap B(w,r))<\mu(E)$ but the atomicity of $E$ implies that $\mu(E\cap B(w,r))=0,$ contradiction. 
\end{proof}
 As a consequence of Lemma \ref{l3}, the following result is well-know in the framework the non-atomic metric measure spaces (see, \cite[Theorem~4.2]{bajaj2024composition} and \cite[Theorem~ 5.2 and Theorem~5.3]{datt2024weighted}). 
 
\begin{theorem}\label{t5} Assume that $(X,d,\mu)$ is a metric measure space with doubling measure, $r(\cdot)\in \mathcal{P}(X)$ such that $1\leq p^{-}\leq p^{+}<+\infty,$ and $\varphi:X\rightarrow X$ Borel non-singular map. If $X$ is connected then the space $L^{r(\cdot)}(X)$ does not admit compact composition operators $T_{\varphi}.$
\end{theorem}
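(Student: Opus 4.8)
The plan is to argue by contradiction: assume $T_{\varphi}$ is a compact composition operator on $L^{r(\cdot)}(X)$, and use the non-atomicity supplied by Lemma \ref{l3} to build a bounded sequence whose image under $T_{\varphi}$ fails to be equi-integrable, thereby contradicting compactness via the precompactness criterion in $L^{r(\cdot)}$ from \cite{gorka2015almost} (the same engine used in the proof of Theorem \ref{t3}). First I would note that by Theorem \ref{t2} continuity of $T_{\varphi}$ forces $u_{\varphi}(\cdot)^{1/p(\cdot)}\in L^{\infty}(X)$, and — assuming the operator is nontrivial, i.e. $C_{\varphi}\neq 0$ — we must have $\mu(U_{\epsilon})>0$ for some $\epsilon>0$, where $U_{\epsilon}:=\{x\in X:u_{\varphi}(x)>\epsilon\}$; otherwise $u_{\varphi}=0$ a.e. and $T_{\varphi}=0$ on a dense set as in the end of the proof of Theorem \ref{t3}.

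Next, invoking Lemma \ref{l3}, since $(X,d,\mu)$ is a connected doubling metric measure space it has no atoms; hence inside $U_{\epsilon}$ I can extract a decreasing sequence of Borel sets $\{U_n\}$ with $0<\mu(U_n)<1/n$. Using a point-of-density argument (Lebesgue differentiation holds in doubling spaces), for each $n$ I fix a point $x_n$ and a small radius so that $V_n:=U_n\cap B(x_n,\delta_n)$ still has positive measure with $\mu(V_n\cap B(x_n,\delta_n))/\mu(B(x_n,\delta_n))\geq 1/2$, and with $\mu(V_n)\to 0$. I then take the normalized test functions $f_n:=\mu(V_n)^{-1/p(\cdot)}\mathcal{X}_{V_n}$, which lie in the unit ball of $L^{p(\cdot)}(X)$ and are supported on sets of measure tending to zero, so $\{f_n\}$ itself is not equi-integrable; the point is to show the images are not equi-integrable either. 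Computing the modular of $T_{\varphi}f_n$ over $\varphi^{-1}(V_n)$ and using $\int_{V_n} u_{\varphi}\,d\mu = \mu(\varphi^{-1}(V_n))$ together with the lower bound $u_{\varphi}>\epsilon$ on $V_n\subset U_\epsilon$, one obtains a lower bound of the form $\int_{\varphi^{-1}(V_n)}|T_{\varphi}f_n|^{p_\varphi(x)}\,d\mu \gtrsim \epsilon$, uniformly in $n$, while $\mu(\varphi^{-1}(V_n))\to 0$ by non-singularity of $\varphi$. This shows $\{T_{\varphi}f_n\}$ is bounded in $L^{r(\cdot)}(X)$ but not equi-integrable, so by \cite[Theorem~1]{gorka2015almost} it has no convergent subsequence, contradicting compactness.

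The main obstacle I anticipate is the modular estimate for $T_{\varphi}f_n$: because the exponent on the target side is $p_\varphi(\cdot)=p(\varphi(\cdot))$ rather than $p(\cdot)$, the clean cancellation $|f_n(\varphi(x))|^{p(\varphi(x))} = \mu(V_n)^{-1}\mathcal{X}_{\varphi^{-1}(V_n)}(x)$ is actually available \emph{for free} here (unlike in the $C_\varphi$ case, where Lemma \ref{l2} and the $LH_0$ regularity were needed to control $\mu(B)^{1-p(x)/p_\varphi(x)}$); so in fact the $T_\varphi$ computation should be simpler than the $C_\varphi$ one, and the real care is just in the bookkeeping: ensuring the sets $V_n$ can be chosen simultaneously shrinking and density-heavy, and handling the $p^+<\infty$ regime so that the modular and norm are comparable via \eqref{eq2}. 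If $X$ is unbounded one must also make sure the normalizing radii $\delta_n$ stay bounded so $V_n$ remains of finite measure; this is immaterial since we may always shrink $\delta_n$.
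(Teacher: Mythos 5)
Your proposal is correct, but it is worth noting that the paper itself gives no argument for Theorem \ref{t5}: it only establishes non-atomicity of $X$ via Lemma \ref{l3} and then defers to the literature (\cite[Theorem~4.2]{bajaj2024composition}, \cite[Theorems~5.2--5.3]{datt2024weighted}), where the non-existence of compact composition operators on non-atomic spaces is taken as known. You instead supply a self-contained proof modeled on the paper's own proof of Theorem \ref{t3}, and your key observation — that the modular computation for $T_{\varphi}$ collapses exactly, $|f_n(\varphi(x))|^{p(\varphi(x))}=\mu(V_n)^{-1}\mathcal{X}_{\varphi^{-1}(V_n)}(x)$, so none of the $LH_0$/Ahlfors machinery of Lemma \ref{l2} is needed — is precisely why the $T_{\varphi}$ case only requires doubling plus connectedness. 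Two simplifications are available. First, the non-triviality caveat is unnecessary: since $\int_X u_{\varphi}\,d\mu=\mu(\varphi^{-1}(X))=\mu(X)>0$, the set $U_{\epsilon}$ automatically has positive measure for some $\epsilon>0$. Second, because you normalize by $\mu(V_n)$ rather than by $\mu(B(x_n,\delta_n))$, the Lebesgue-density step is superfluous: taking $A_n=\varphi^{-1}(U_n)$ directly gives $\int_{A_n}|T_{\varphi}f_n|^{p_{\varphi}(x)}\,d\mu=\mu(U_n)^{-1}\int_{U_n}u_{\varphi}\,d\mu\geq\epsilon$, while $\mu(A_n)=\int_{U_n}u_{\varphi}\,d\mu\to0$ by absolute continuity of the integral of $u_{\varphi}\in L^1(X)$ (this, rather than non-singularity alone, is the correct justification that $\mu(\varphi^{-1}(V_n))\to0$). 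With those adjustments the argument is clean and arguably preferable to the paper's citation-only treatment.
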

\subsection{Weak compactness of $T_{\varphi}$ on $L^{p(\cdot)}([0,1])$}
     
In the case $1<p^{-}\leq p^{+}<+\infty$ is well-know that $L^{p(\cdot)}$ is reflexive space, so every composition operator on $L^{p(\cdot)}$ is weakly compact. The non-reflective case $(p^{-}=1)$ is different and has been explored in \cite{Takagi1992} for the constant exponent case. In the following theorem we provide some results in this direction.
       
Denote by $\lambda$ the Lebesgue measure on $[0,1]$ or $\mathbb{R}$ and $\Omega_{1}:=p^{-1}(\{1\})$ for an exponent $p(\cdot)$.
       
   \begin{theorem}\label{t4} 
    Let \(r(\cdot) \in \mathcal{P}([0,1])\) with \(\lambda(\Omega_{1}) = 0\) 
    and \(r^{+} < \infty\). Let \(\varphi: [0,1] \rightarrow [0,1]\) 
    be a non-singular map such that 
    \[ 
    T_{\varphi}: L^{r(\cdot)}([0,1]) \rightarrow L^{r_{\varphi}(\cdot)}([0,1]). 
    \] 
    Then, the following properties hold:
    
    \begin{enumerate}[label=\textnormal{(w.\arabic*)}]
        \item \label{w1} \(T_{\varphi}\) maps relatively weakly compact subsets into relatively weakly compact subsets.  
        
        \item \label{w2} Let \(v_{\varphi}(\cdot) := r(\cdot)^{1/p(\cdot)}\). The operator \(T_{\varphi}\) is relatively weakly compact if and only if the multiplication operator \(M_{v_{\varphi}}\) is relatively weakly compact.

        \item \label{w3} Let \(r^{-} = 1\). If 
        \[ 
        M := \inf_{z \in \Omega \setminus \Omega_1} u_{\varphi}(z) > 0, 
        \] 
        then the operator \(T_{\varphi}: L^{r(\cdot)}([0,1]) \rightarrow L^{r_{\varphi}(\cdot)}([0,1])\) is not weakly compact.
    \end{enumerate}
\end{theorem}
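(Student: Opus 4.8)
All three parts flow from the change of variables underlying Theorem~\ref{t2}. For any measurable $f$ and any $\eta>0$, \eqref{I1*} gives
\[
\rho_{r_\varphi}\!\Big(\tfrac{f\circ\varphi}{\eta}\Big)=\int_{[0,1]}\Big|\tfrac{f(\varphi(x))}{\eta}\Big|^{r(\varphi(x))}\,dx=\int_{[0,1]}\Big|\tfrac{u_\varphi(y)^{1/r(y)}f(y)}{\eta}\Big|^{r(y)}\,dy=\rho_{r}\!\Big(\tfrac{v_\varphi f}{\eta}\Big),
\]
where $v_\varphi(x)=u_\varphi(x)^{1/r(x)}$ is the symbol of the multiplication operator $M_{v_\varphi}$ of Theorem~\ref{t2}; by that theorem $v_\varphi\in L^\infty([0,1])$, and by the closed graph theorem $T_\varphi$ is bounded. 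Hence $\|f\circ\varphi\|_{r_\varphi(\cdot)}=\|M_{v_\varphi}f\|_{r(\cdot)}$. Moreover non-singularity forces $\mu\big(\varphi^{-1}(\{u_\varphi=0\})\big)=\int_{\{u_\varphi=0\}}u_\varphi\,d\mu=0$, so $M_{v_\varphi}f=M_{v_\varphi}g$ (i.e. $f=g$ a.e. on $\{u_\varphi>0\}$) implies $f\circ\varphi=g\circ\varphi$ a.e. Therefore $U\colon M_{v_\varphi}f\mapsto f\circ\varphi$ is a well-defined linear isometry of $M_{v_\varphi}(L^{r(\cdot)}([0,1]))$ onto $T_\varphi(L^{r(\cdot)}([0,1]))$, and it extends to an isometric isomorphism $U\colon Z\to Y$ of the closed subspaces $Z:=\overline{M_{v_\varphi}(L^{r(\cdot)})}\subset L^{r(\cdot)}([0,1])$ and $Y:=\overline{T_\varphi(L^{r(\cdot)})}\subset L^{r_\varphi(\cdot)}([0,1])$, with $T_\varphi=U\circ M_{v_\varphi}$. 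This factorization is the whole structural input; what remains is soft.

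For \ref{w1} I would note that $T_\varphi$ is bounded, hence weak--weak continuous, and that a weak--weak continuous map sends any relatively weakly compact set $K$ to a relatively weakly compact set, since $\overline{T_\varphi(K)}^{\,w}$ is contained in the weakly compact (hence weakly closed) set $T_\varphi\big(\overline{K}^{\,w}\big)$. For \ref{w2}: $Z$ and $Y$ are norm-closed and convex, hence weakly closed, so relative weak compactness of a subset of $Z$ (resp. $Y$) is the same whether computed in $Z$ (resp. $Y$) or in the ambient space; combining this with the fact that the isomorphism $U$ is a homeomorphism for the weak topologies, and with $T_\varphi(B)=U\big(M_{v_\varphi}(B)\big)$ for the closed unit ball $B$ of $L^{r(\cdot)}([0,1])$, one gets that $M_{v_\varphi}(B)$ is relatively weakly compact if and only if $T_\varphi(B)$ is, i.e. $T_\varphi$ is weakly compact if and only if $M_{v_\varphi}$ is.

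For \ref{w3}, by \ref{w2} it suffices to show that $M_{v_\varphi}$ is not weakly compact. Since $\lambda(\Omega_1)=0$, the set $[0,1]\setminus\Omega_1$ has full measure, so the hypothesis $M=\inf_{z\in[0,1]\setminus\Omega_1}u_\varphi(z)>0$ yields $u_\varphi\ge M$ a.e.; together with $1\le r(\cdot)\le r^+<\infty$ this gives $v_\varphi=u_\varphi^{1/r(\cdot)}\ge c_0:=\min\{M,M^{1/r^+}\}>0$ a.e. From the modular bound $\rho_r(M_{v_\varphi}f/\eta)\ge\rho_r(c_0 f/\eta)$ one then gets $\|M_{v_\varphi}f\|_{r(\cdot)}\ge c_0\|f\|_{r(\cdot)}$ for every $f$, so $M_{v_\varphi}$ is bounded below, hence an isomorphism onto its closed range $R$ with bounded inverse $S\colon R\to L^{r(\cdot)}([0,1])$. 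If $M_{v_\varphi}$ were weakly compact, then $M_{v_\varphi}(B)$ would be relatively weakly compact; since $R$ is weakly closed and $S$ is weak--weak continuous, $B=S\big(M_{v_\varphi}(B)\big)$ would be relatively weakly compact, i.e. $L^{r(\cdot)}([0,1])$ would be reflexive, contradicting $r^-=1$. Hence $M_{v_\varphi}$, and therefore $T_\varphi$, is not weakly compact.

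The one delicate point is this last contradiction: that $L^{r(\cdot)}([0,1])$ is not reflexive when $r^-=1$, \emph{even though} $\lambda(\Omega_1)=0$. I would either invoke the standard characterization of reflexivity of variable Lebesgue spaces (reflexive $\iff 1<r^-\le r^+<\infty$; see \cite{diening2011lebesgue}), or argue directly: because $\lambda(\{x:r(x)<1+\varepsilon\})\downarrow\lambda(\Omega_1)=0$ as $\varepsilon\downarrow0$ while staying positive for every $\varepsilon>0$, one can extract pairwise disjoint sets $E_k\subset[0,1]$ with $\lambda(E_k)>0$ and $\esssup_{E_k}r\to1$; disjointness makes the modular additive on $\operatorname{span}\{\mathcal{X}_{E_k}\}$, and since $r$ is close to $1$ on each $E_k$ the normalized indicators $\mathcal{X}_{E_k}/\|\mathcal{X}_{E_k}\|_{r(\cdot)}$ are equivalent to the unit vector basis of $\ell^1$, precluding reflexivity (indeed, precluding relative weak compactness of $B$ directly). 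Nailing down this $\ell^1$-estimate — equivalently, the precise reflexivity statement for exponents that touch $1$ only on a null set — is the step I expect to require the most care; everything else is the factorization $T_\varphi=U\circ M_{v_\varphi}$ plus routine weak-topology bookkeeping.
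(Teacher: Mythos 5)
Your argument is correct in substance but takes a genuinely different route from the paper's, so a comparison is in order. For (w.1) you use only that a bounded operator is weak--weak continuous and hence preserves relative weak compactness; this is the right argument (the paper's displayed ``proof of (w1)'' is in fact a mislabelled proof of (w.3)). For (w.2) the paper invokes the quantitative characterization of relatively weakly compact subsets of $L^{p(\cdot)}$ from \cite{HernandezRuizSanchiz21} together with the modular identity $\rho_{r_{\varphi}}(\lambda\, T_{\varphi}f)=\rho_{r}(\lambda\, M_{v_{\varphi}}f)$, whereas you upgrade that identity to the isometric factorization $T_{\varphi}=U\circ M_{v_{\varphi}}$, with $U$ an isometry between the closures of the two ranges, and transfer weak compactness through $U$; this is more self-contained and gives the equivalence without any external criterion. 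The real divergence is in (w.3): the paper tests the criterion of \cite{HernandezRuizSanchiz21} on the normalized indicators $f_{ab}=(b-a)^{-1/r(\cdot)}\mathcal{X}_{[a,b]}$, applies Lebesgue differentiation to obtain $\lambda_{0}^{r(z)-1}u_{\varphi}(z)<\epsilon$ a.e., and evaluates along a sequence $z_{n}$ with $r(z_{n})\to1$ to contradict $M>0$; you instead observe that $u_{\varphi}\geq M$ a.e.\ makes $M_{v_{\varphi}}$, hence $T_{\varphi}$, bounded below, so weak compactness would force the unit ball of $L^{r(\cdot)}([0,1])$ to be weakly compact, i.e.\ the space to be reflexive. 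Your route is shorter and avoids the differentiation step, but it places all the weight on the non-reflexivity of $L^{r(\cdot)}([0,1])$ when $r^{-}=1$ even though $\lambda(\Omega_{1})=0$ --- exactly the point you flag. That claim is true and standard: $r^{-}=1$ forces $(r')^{+}=\infty$, so $L^{r'(\cdot)}$ fails to have absolutely continuous norm, its dual strictly contains the associate space, and $L^{r(\cdot)}$ cannot be reflexive (see \cite{cruz-uribe2013,diening2011lebesgue}); alternatively your disjoint-supports $\ell^{1}$ construction works but would need to be written out in full. With that step either cited or completed, I see no gap in the proposal.
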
       
        
\begin{proof} To prove $(w1)$, assume that $M>0$ then since $r^{-}=1$ we may choose a sequence $(z_n)\subset[0,1]\setminus\Omega_{1}$  satisfying $r(z_n)\rightarrow1$ when $n\rightarrow+\infty.$ So, denote by $$B:=B_{L^{r(\cdot)}}=\{f\in L^{r(\cdot)}([0,1]):\|f\|_{r(\cdot)}\leq 1\}.$$ Let us reason by contradiction, if $T_{\varphi}$ is weakly compact then the subset $T_{\varphi}(B)$ is relatively weakly compact in $L^{r_{\varphi}(\cdot)}([0,1]).$ Hence, by \cite[Theorem~4.3]{HernandezRuizSanchiz21} we have 
\begin{equation}\label{wa1}
\displaystyle\lim_{\lambda\rightarrow 0^{+}}\sup_{f\in B}\lambda^{-1}\int_{[0,1]} |\lambda|^{r_{\varphi}(z)} \ |(T_{\varphi}f)(z)|^{r_{\varphi}(z)} \ dz=0,
\end{equation}
Since, 
\begin{equation}\label{wa2}
\lambda^{-1}\int_{[0,1]} |\lambda|^{r_{\varphi}(z)} \ |(T_{\varphi}f)(z)|^{r_{\varphi}(z)} \ dz=\lambda^{-1}\int_{[0,1]} |\lambda|^{r(z)} \ |f(z)|^{r(z)} \ u_{\varphi}(z) \ dz, 
\end{equation}
by (\ref{wa1}) we get 
\begin{equation}\label{wa3}
\displaystyle\lim_{\lambda\rightarrow 0^{+}}\sup_{f\in B}\lambda^{-1}\int_{[0,1]} |\lambda|^{r(z)} \ |f(z)|^{r(z)} \ u_{\varphi}(z) \ dz=0.
\end{equation}
Now, let $\mathcal{I}_{0}':=\{[a,b]\in\mathcal{I}_0:[a,b]\subset[0,1]\}$ and  we consider the functions $f:\Omega\rightarrow\mathbb{R}$ given by  $$f(x):=f_{ab}(x):=\displaystyle(b-a)^{-1/r(x)} \  \mathcal{X}_{[a,b]}(x), \ \ x\in[0,1].$$
    So, it is clear that $\{f_{ab}:[a,b]\in\mathcal{I'}_{0}\}\subset B$ and thus from (\ref{wa3}) given $\epsilon>0$ there exists $\lambda_0>0$ small such that $$\frac{1}{b-a}\int_{a}^{b} \lambda_{0}^{r(z)-1} \  \ u_{\varphi}(z) \ dz<\epsilon, \ \ \forall \ [a,b]\in\mathcal{I'}_{0}$$
    by differentiation, $$\lambda_{0}^{r(z)-1} \ u_{\varphi}(z)<\epsilon, \ \ \textrm{a.e. in } \ z\in[0,1]\setminus\Omega_{1}.$$
    In particular,  taking $z=z_n$ and $n\rightarrow+\infty$ we obtained $M<\liminf u_{\varphi}(z_n)=0$ because $\epsilon$ is taken arbitrarily so $M<0$ which is a contradiction. The property $(w2)$ follows from (\ref{wa2}), \cite[Theorem~4.3]{HernandezRuizSanchiz21} and of the fact $\mathcal{U}(\varphi)<\infty.$ Finally, the property $w3$ follows of \ref{wa2} and \cite[Theorem~4.3]{HernandezRuizSanchiz21}. \end{proof} 
    
     \begin{remark}
     The proof of Theorem {\rm{\ref{t4}}} can be easily extended to $L^{r(\cdot)}(\mathbb{R})$ by applying results recently obtained in {\rm{\cite[section~5]{HernandezRuizSanchiz22}} }under the restriction that  $\Omega_1$ be a null-set. 
    \end{remark}
     \textbf{\textit{The case $\lambda(\Omega_1)>0$ and $r^{-}=1$}:} Suppose that $\varphi^{-1}(\Omega_1)\subset\Omega_1,$ let us choose $z_0\in\mathbb{R}$ such that $p(z_0)=1$ and for each $n\in\mathbb{N}$ define $I_n:=(z_0-1/2^{n},z_0+1/2^{n})$ so $I_n\cap\Omega_1\neq \emptyset$ for each $n\in\mathbb{N}.$ We show that $T_{\varphi}(B)$ is not relatively weakly compact provided that $M>0$; define the sequence of measurable functions $\{f_n\}$ by 
    \begin{center}
        $f_n(x):=      \displaystyle\frac{1}{\lambda(I_n)} \ \mathcal{X}_{I_n\cap\Omega_1}(x), \ \ x\in\mathbb{R}.$    
    \end{center}
    
It is clear that $f_n\in B$ for every $n\in\mathbb{N}.$ However, from $u_{\varphi}(x)\geq M$ a.e. $x\in\mathbb{R}$ we obtain $$\displaystyle\int_{\varphi^{-1}(I_n)\cap\Omega_1} (f_n \circ\varphi)(x) \ dx\geq  \int_{I_n\cap\Omega_1} f_{n}(x) \ u_{\varphi}(x) \ dx\geq M \ \frac{\lambda(I_n\cap\Omega_1)}{\lambda(I_n)}.$$
   Taking a subsequence if necessary, let us make  $\lambda(I_n)\rightarrow0$ this means $\lambda(\varphi^{-1}(I_n))\rightarrow0$ and by differentiation $$\displaystyle\frac{\lambda(I_n\cap\Omega_1)}{\lambda(I_n)}\rightarrow1, \ \ \ n\rightarrow+\infty$$
   Therefore, $$\liminf \displaystyle\int_{\varphi^{-1}(I_n)\cap\Omega_1} (f_n \circ\varphi)(x) \ dx\geq M>0, \ \ \ \mu(\varphi^{-1}(I_n))\rightarrow0.$$
   Applying \cite[Proposition~5.11] {HernandezRuizSanchiz22} we yields the assertion for $T_{\varphi}(B).$ 



\section*{Acknowledgments}
The authors thank the anonymous referees for the suggestions to improve this article. Carlos F. Álvarez was partially supported by inner project BASEX-PD/2023-02 of University of Sinú, Cartagena. 


\bibliography{Bibliography}

\bibliographystyle{acm}

\end{document}